\newtheorem{theorem}{Theorem}[section]
\newtheorem{lemma}[theorem]{Lemma}
\newtheorem{sublemma}[theorem]{Sublemma}
\newtheorem{definition}{Definition}[section]
\newtheorem{proposition}[theorem]{Proposition}
\newtheorem{remark}{Remark}[section]
\newtheorem*{TheoremA}{Theorem A}
\newtheorem*{TheoremB}{Theorem B}
\newtheorem*{TheoremC}{Theorem C}
\newtheorem*{TheoremD}{Theorem D}
\newtheorem*{TheoremE}{Theorem E}
\newtheorem*{TheoremF}{Theorem F}
\def\diam{\mathop{\hbox{\rm diam}}}
\def\diams{\rm\small diam}
\def \d{\delta}
\def \e{\epsilon}
\def \M{\mathcal{M}}
\begin{document}

\title{Nonadditive  Measure-theoretic Pressure and  Applications to Dimensions  of an Ergodic Measure }
\author{     Yongluo Cao$^{\dag}$, Huyi Hu$^{\ddag}$ and Yun Zhao$^{\dag,\footnote{Corresponding author }}$ \\
\small\it $\dag$ Department of Mathematics, Soochow  University, Suzhou 215006, Jiangsu, P.R.China\\
  \small\it (email: ylcao@suda.edu.cn, zhaoyun@suda.edu.cn)\\
  \small\it $\ddag$ Department of Mathematics, Michigan State University, East Lansing, MI 48824,
  USA\\
  \small\it (email: hu@math.msu.edu)
  }
\date{}
 \footnotetext{2000 {\it Mathematics Subject classification}:
 37D35, 37A35, 37C45}
 \maketitle

\begin{center}
\begin{minipage}{120mm}
{\small {\bf Abstract.} Without any additional conditions on
subadditive potentials, this paper defines subadditive
measure-theoretic pressure, and shows  that the subadditive
measure-theoretic pressure for ergodic measures can be described
in terms of measure-theoretic entropy and a constant associated
with the ergodic measure. Based on the definition of topological
pressure on non-compact set, we give another equivalent definition
of subadditive measure-theoretic pressure, and obtain an inverse
variational principle. This paper also studies  the supadditive
measure-theoretic pressure which has similar formalism as the
subadditive measure-theoretic pressure. As an application of the
main results, we prove that an average conformal repeller admits
an ergodic measure of maximal Haudorff dimension. Furthermore, for
each ergodic measure supported on an average conformal repeller,
we construct a set whose dimension is equal to the dimension of
the measure. }
\end{minipage}
\end{center}

\vskip0.5cm

{\small{\bf Key words and phrases.} \  nonadditive,
measure-theoretic pressure, variational principle, ergodic
measure, Hausdorff dimension.}\vskip0.5cm

\section{Introduction.}
\setcounter{equation}{0}

It is well-known that the topological pressure for additive potentials
was first introduced by Ruelle  for expansive maps acting on
compact metric spaces (\cite{rue}), furthermore he formulated a
variational principle for the topological pressure in that paper.
Later, Walters (\cite{wal}) generalized these results to general
continuous maps on compact metric spaces. In \cite{pes2}, Pesin
and Pitskel' defined the topological pressure for noncompact sets
which is a generalization of Bowen's definition of topological
entropy for noncompact sets (\cite{bo1}), and  they proved the
variational principle under some supplementary conditions.
The notions of the topological pressure, variational principle and
equilibrium states play a fundamental role in statistical
mechanics, ergodic theory and dynamical systems (see the books
\cite{bo3,w2}).

Since the work of Bowen (\cite{bo2}), topological pressure becomes
a fundamental tool for study of the dimension theory in conformal
dynamical systems (see \cite{pes1}).  Different versions of
topological pressure were defined in dimension theory and ergodic
theory.  It has been found out that the dimension of nonconformal
repellers can be well estimated by the zero of pressure function
(see e.g. \cite{ban,ba,cli,falconer,zhang}).  Falconer
(\cite{falconer}) considered the thermodynamic formalism for
subadditive potentials on mixing repellers. He proved the
variational principle under some Lipschitz conditions and bounded
distortion assumptions on subadditive potentials. Barreira
(\cite{ba}) defined topological pressure for arbitrary sequences
of potentials on arbitrary subsets of compact metric spaces, and
proved the variational principle under some convergence
assumptions on the potentials.  However, the conditions given by
Falconer and Barreira are not usually satisfied by general
subadditive potentials.

In \cite{c}, the authors generalized the results of Ruelle and
Walters to subadditive potentials in general compact dynamical
systems. They defined the subadditive topological pressure and
gave a variational principle for the subadditive topological
pressure.  We mention that their result do not need any additional
conditions on either the subadditive potentials or the spaces, as
long as they are compact metric spaces.  In \cite{z1}, the authors
defined the subadditive measure-theoretic pressure by using
spanning sets, and obtained a formalism similar to that for
additive measure-theoretic pressure in \cite{h} under tempered
variation assumptions on subadditive potentials. Another
equivalent definition of subadditive measure-theoretic pressure is
given in \cite{z2} under the same conditions on the sub-additive
potentials by using the definition of topological pressure on
noncompact sets. In \cite{gh}, Zhang studied the local
measure-theoretic pressures for subadditive potentials. The
pressure is local in the sense that an open cover is fixed.

Part of this paper is a continuation of the work in \cite{z1} and
\cite{z2}. We modify the definition of subadditive
measure-theoretic pressure there, and remove the extra condition
for the formulas. More precisely, for an ergodic measure $\mu$,
the subadditive measure-theoretic pressure defined by using
Carath\'eodory structure can be expressed as the sum of the
measure-theoretic entropy and the integral of the limit of the
average value of the subadditive potentials. Consequently, this
paper gives equivalence to an alternative definition
of subadditive measure-theoretic pressure by considering
spanning sets about which the Bowen balls only cover a set
of measure greater than or equal to $1-\delta$.
The results we get here do not need any
additional assumptions on the subadditive sequences and the
topological dynamical systems, except for compactness of the
spaces. Meanwhile, we also define the measure-theoretic pressure
for supadditive potentials in a similar way and obtain the same
properties.

The present work is also motivated by the dimension theory in
dynamical systems.
Dimensions of a compact invariant set can often be determined
or estimated by a unique root of certain pressure functions.
As an application, we proved that the dimension of an
average conformal repeller which is introduced in \cite{ban}
satisfies a variation principle, i.e., the dimension of this
repeller is equal to the dimension of some ergodic measure
supported on it. Moreover, for each ergodic measure supported on
an average conformal repeller, we could construct a certain set
with the same dimension of the measure.

The paper is organized in the following manner. The main results,
as well as definitions of the measure-theoretic pressure and lower
and upper capacity measure-theoretic pressure for subaddtive potentials,
are given in Section 1. We prove Theorem~A and B, the results
related to subadditive potentials, in Section 2.  Section 3 is
devoted to supadditive potentials, where we give definitions of the
pressures, and prove Theorem~C and D. Section 4 is for application
to dimension of average conformal repellers, where the results are
stated in Theorem~E and F.

\section{Main results}
\setcounter{equation}{0}

Let $(X,T)$ be a topological dynamical systems(TDS),
that is, $X$ is a compact metric space with a metric $d$,
and $T : X\rightarrow X$ is a continuous transformation.
Denote by $\mathcal{M}_T$ and $\mathcal{E}_T$ the set of all
$T$-invariant Borel probability measures on $X$ and the set of
ergodic measures respectively.
For each $\mu\in \mathcal{M}_T$,
let $h_{\mu}(T)$ denote the measure-theoretic entropy of $T$ with
respect to $\mu$.

A sequence ${\mathcal{F}}=\{ f_n\}_{n\geq 1}$ of continuous
functions on $X$ is a \emph{subadditive potential} on $X$, if
\[
f_{n+m}(x)\leq f_n(x) +f_m(T^n x)
\ \ \text{for all} \ \ x\in X, ~ n,m\in {\mathbb{N}}.
\]
For $\mu\in \mathcal{M}_T$, let ${\mathcal{F}}_{*}(\mu)$ denote
 the following limit
\[ {\mathcal{F}}_{*}(\mu)=\lim\limits_{n\rightarrow\infty}
 \frac{1}{n} \int f_n {\mathrm{d}}\mu
=\inf_{n\geq 1} \Bigl\{\frac{1}{n}\int f_n{\mathrm{d}} \mu\Bigr\}.
 \]
The limit exists since $\{\int f_n{\mathrm{d}} \mu\}_{n\geq 1}$ is
a subadditive sequence. Also, by subadditive ergodic theorem
\cite{king} the limit $\lim\limits_{n\rightarrow\infty}(1/n)f_n$
exists $\mu$-almost everywhere for any $\mu\in \M_T$.

Let $d_n(x,y)=\max\{d(T^{i}(x),T^{i}(y)):i=0,\cdots,n-1\}$ for any
$x,y\in X$, and $B_n(x,\epsilon)=\{y\in X:d_n(x,y)<\epsilon\}$.
A set $E\subseteq X$ is said to be an $(n,\epsilon)$-separated
subset of $X$ with respect to $T$ if $x,y\in E, x\neq y$,
implies $ d_{n}(x,y)> \epsilon$.
A set $F\subseteq X$ is said to be an $(n,\epsilon)$-spanning subset
of $X$ with respect to $T$ if $\forall x\in X$, $\exists y\in F$ with
$d_{n}(x,y)\leq \epsilon$.
For each $\mu\in \mathcal{M}_T$, $0<\delta <1$, $n\geq 1$ and
$\epsilon >0$, a subset $F\subseteq X$ is an
$(n,\epsilon,\delta)$-spanning set if the union $\bigcup_{x\in F}
B_n(x,\epsilon)$ has $\mu$-measure more than or equal to
$1-\delta$.

Recall that the \emph{subadditive topological pressure} of $T$ with respect to
a subadditive potential ${\mathcal{F}}=\{ f_n\}_{n\geq 1}$ is give by
\[
P(T,{\mathcal{F}})
=\lim\limits_{\epsilon\rightarrow 0}P(T,{\mathcal{F}},\epsilon),
\]
where
\begin{eqnarray*}
&&P(T,{\mathcal{F}},\epsilon)=\limsup\limits_{n\rightarrow\infty}
\frac{1}{n} \log P_{n}(T,{\mathcal{F}},\epsilon),\\
&&P_{n}(T,{\mathcal{F}},\epsilon)=\sup \{ \sum\limits_{x\in E}
e^{f_{n}(x)} : E\  \text{is  an}\ (n,\epsilon)\text{-separated
subset of}\ X \}.
\end{eqnarray*}
(See e.g. \cite{ba}, \cite{c}.)
It satisfies a variational principle (see \cite{c} for a proof and \cite{z3}
for its random version).

In \cite{kat}, Katok showed that measure-theoretic entropy can be regarded
as the growth rate of the minimal number of $\epsilon$-balls in the $d_n$
metric that cover a set of measure more than or equal to $1-\delta$.
Motivated by the observation, the following definition can be given.

\begin{definition}\label{defPmu}
Given a subadditive potential $\mathcal{F}=\{f_n\}$,
for $\mu\in \mathcal{E}_T$, $0< \delta < 1$, $n\geq 1$, and $\epsilon>0$,
put
 \begin{eqnarray*}
 &&P_{\mu}(T,{\mathcal{F}},n,\epsilon,\delta)=\inf \{ \sum\limits_{x\in
 F} \exp[\sup_{y\in B_n(x,\epsilon)} f_{n}(y) ]\mid F\ \hbox{is an}\ (n,\epsilon,\delta)- \hbox{spanning set} \},\\
&&P_{\mu}(T,{\mathcal{F}},\epsilon,\delta)=\limsup\limits_{n\rightarrow\infty}
 \frac{1}{n} \log  P_{\mu}(T,{\mathcal{F}},n,\epsilon,\delta),\\
&&P_{\mu}(T,{\mathcal{F}},\delta)=\liminf\limits_{\epsilon\rightarrow
 0} P_{\mu}(T,{\mathcal{F}},\epsilon,\delta),\\
 &&P_{\mu}(T,{\mathcal{F}})=\lim\limits_{\delta\rightarrow
 0} P_{\mu}(T,{\mathcal{F}},\delta).
 \end{eqnarray*}
$P_{\mu}(T,{\mathcal{F}})$ is said to be the
\emph{subadditive measure-theoretic pressure} of $T$ with respect to
$\mathcal{F}$.
\end{definition}

\begin{remark}\label{RmkPmu}
It is easy to see that $P_{\mu}(T,{\mathcal{F}},\delta)$ increases
with $\delta$.  So the limit in the last formula exists.
In fact, it is proved in \cite[Theorem 2.3]{czc} that
$P_{\mu}(T,{\mathcal{F}},\delta)$ is independent of $\delta$.
Hence, the limit of $\delta\rightarrow 0$ is redundant in the definition.
The same phenomenon can also be seen for measure-theoretic entropy
(see \cite[Theorem 1.1]{kat}).
\end{remark}

\begin{remark}\label{Rmkvarphi}
If $\mathcal{F}=\{ f_n \}$ is additive generated by a continuous
function $\varphi$,  that is,
$f_n(x)=\sum_{i=0}^{n-1} \varphi (T^i x)$ for some continuous
function $\varphi:X\rightarrow \mathbb{R}$,
then we simply write $P_{\mu}(T,{\mathcal{F}})$ as
$P_{\mu}(T,\varphi)$.
\end{remark}

An alternative definition of subadditive measure-theoretic
pressure can be given by using the theory of Carath\'eodory
structure (see \cite{pes1} for more details in additive case).

Let $Z\subseteq X$ be a subset of $X$, which does not have to be
compact nor $T$-invariant.
Fix $\epsilon >0$, we call $\Gamma=\{B_{n_i}(x_i,\epsilon)\}_i$
a \emph{cover of $Z$} if $Z\subseteq\bigcup_i B_{n_i}(x_i,\epsilon)$. For
$\Gamma=\{B_{n_i}(x_i,\epsilon)\}_i$, set
$n(\Gamma)=\min_i\{n_i\}$.

The theory of Carath\'eodory dimension characteristic ensures
the following definitions.

\begin{definition}\label{defPmu*}
Let $s\geq 0$, put
\begin{eqnarray}\label{other1}
M(Z,{\mathcal{F}},s,N,\epsilon)=\inf_\Gamma\sum_i
\exp\bigl(-sn_i+\sup_{y\in B_{n_i}(x_i,\epsilon) } f_{n_i}( y)\bigr),
\end{eqnarray}
where the infimum is taken over all covers $\Gamma$ of $Z$ with
$n(\Gamma)\geq N$.  Then let
\begin{gather}
m(Z,{\mathcal{F}},s,\epsilon)
=\lim_{N\rightarrow\infty}M(Z,{\mathcal{F}},s,N,\epsilon),  \label{other2}\\
P_{Z}(T,{\mathcal{F}},\epsilon)
=\inf \{ s: m(Z,{\mathcal{F}},s,\epsilon)=0 \}
 =\sup \{ s: m(Z,{\mathcal{F}},s,\epsilon)=+\infty \} \label{other3}, \\
P_{Z}(T,{\mathcal{F}})=\liminf_{\epsilon\rightarrow \label{other4}
0}P_{Z}(T,{\mathcal{F}},\epsilon),
\end{gather}
where $P_{Z}(T,{\mathcal{F}})$ is called a \emph{subadditive
topological pressure} of $T$ on the set $Z$ (w.r.t. ${\mathcal{F}}$).

Further, for $\mu \in \mathcal{M}_T$, put
\begin{gather}
P_{\mu}^{*}(T,{\mathcal{F}},\epsilon)
=\inf \{ P_{Z}(T,{\mathcal{F}},\epsilon):\mu (Z)=1\}, \notag \\
P_{\mu}^{*}(T,{\mathcal{F}})=\liminf_{\epsilon\rightarrow
0}P_{\mu}^{*}(T,{\mathcal{F}},\epsilon), \label{pressure1}
\end{gather}
where $P_{\mu}^{*}(T,{\mathcal{F}})$ is called a \emph{subadditive
measure-theoretic pressure} of $T$ with respect to $\mu$.
\end{definition}

It is easy to see that the definition is consistent with that
given in \cite{ba} by using arbitrary open covers.

Lower and upper capacity topological pressure for additive sequence
were defined in \cite{pes1}.  Now we give similar definitions:

\begin{definition}\label{defCP}
Put
\[
\Lambda(Z,{\mathcal{F}},N,\epsilon)
=\inf_\Gamma\sum_i\exp\bigl(\sup_{y\in B_{N}(x_i,\epsilon) }f_N(y)\bigr),
\]
where the infimum is taken over all covers  $\Gamma$ of $Z$ with
$n_i= N$ for all $i$.  Then we set
\begin{gather}
\underline{CP}_{Z}(T,{\mathcal{F}},\epsilon)
=\liminf_{N\rightarrow\infty} \frac{1}{N}
    \log\Lambda(Z,{\mathcal{F}},N,\epsilon),      \label{sup1}  \\
\overline{CP}_{Z}(T,{\mathcal{F}},\epsilon)
=\limsup_{N\rightarrow\infty} \frac{1}{N}\log
\Lambda(Z,{\mathcal{F}},N,\epsilon).  \label{sup2}
\end{gather}

For $\mu \in \mathcal{M}_T$, define
\begin{gather*}
\underline{CP}_{\mu}^{*}(T,{\mathcal{F}},\epsilon)
=\lim_{\delta\rightarrow 0}\ \inf \{
\underline{CP}_{Z}(T,{\mathcal{F}},\epsilon):\mu (Z)\geq 1-\delta\}, \\
\overline{CP}_{\mu}^{*}(T,{\mathcal{F}},\epsilon)
=\lim_{\delta\rightarrow 0}\ \inf \{
\overline{CP}_{Z}(T,{\mathcal{F}},\epsilon):\mu (Z)\geq 1-\delta\}.
\end{gather*}
The \emph{subadditive lower} and \emph{upper capacity measure-theoretic
pressure of $T$} with respect to measure $\mu$ are defined by
\begin{gather}
\label{pressure2}
\underline{CP}_{\mu}^{*}(T,{\mathcal{F}})
=\liminf_{\epsilon\rightarrow 0}
\underline{CP}_{\mu}^{*}(T,{\mathcal{F}},\epsilon),\\
\label{pressure3}
\overline{CP}_{\mu}^{*}(T,{\mathcal{F}})
=\liminf_{\epsilon\rightarrow 0}
\overline{CP}_{\mu}^{*}(T,{\mathcal{F}},\epsilon).
\end{gather}
\end{definition}


\begin{TheoremA}\label{ThmA} \rm
Let $(X,T)$ be a TDS and ${\mathcal{F}}=\{ f_n\}_{n\geq 1}$
 a subadditive potential on $X$.  For any $\mu\in
\mathcal{E}_T$ with ${\mathcal{F}}_*(\mu)\neq -\infty$,  we have
\[
P_{\mu}^{*}(T,{\mathcal{F}})
=\underline{CP}_{\mu}^{*}(T,{\mathcal{F}})
=\overline{CP}_{\mu}^{*}(T,{\mathcal{F}})
=P_{\mu}(T,{\mathcal{F}})
=h_{\mu}(T)+{\mathcal{F}}_{*}(\mu).
\]
\end{TheoremA}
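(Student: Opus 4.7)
My plan is to establish the sandwich
\[
h_\mu(T)+\mathcal{F}_*(\mu)\ \le\ P_\mu^*(T,\mathcal{F})\ \le\ \underline{CP}_\mu^*(T,\mathcal{F})\ \le\ \overline{CP}_\mu^*(T,\mathcal{F})\ \le\ P_\mu(T,\mathcal{F})\ \le\ h_\mu(T)+\mathcal{F}_*(\mu),
\]
which forces equality throughout. The middle inequalities $P_\mu^*\le\underline{CP}_\mu^*\le\overline{CP}_\mu^*$ follow from the pointwise comparison $P_Z\le\underline{CP}_Z\le\overline{CP}_Z$ (a standard feature of Carath\'eodory versus capacity structures), combined with a short diagonal argument: picking $Z_n$ with $\mu(Z_n)\ge 1-2^{-n}$ and $P_{Z_n}$ approximating $\lim_\delta\inf_{\mu(Z)\ge 1-\delta}P_Z$, the set $\limsup Z_n$ has full measure and, by countable stability of $P$, realizes the same pressure, giving $P_\mu^*=\lim_\delta\inf_{\mu(Z)\ge 1-\delta}P_Z$. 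The three substantive inequalities are treated below.

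\textbf{Upper bound $P_\mu\le h_\mu+\mathcal{F}_*$.} The main obstacle is that the modulus of continuity of $f_n$ need not be controlled uniformly in $n$, so $\sup_{B_n(x,\epsilon)}f_n$ cannot be replaced by $f_n(x)$ directly. I would work with a block length $k$ chosen so that $\frac{1}{k}\int f_k\,d\mu\le\mathcal{F}_*(\mu)+\gamma$ and, by subadditivity, use
\[
f_n(y)\le\sum_{j=0}^{m-1}f_k(T^{jk}y)+O(k),\qquad mk\le n<(m+1)k.
\]
Since the single continuous $f_k$ has a finite modulus $\omega_k$, for $y\in B_n(x,\epsilon)$ the right-hand side is bounded above by $\sum_j f_k(T^{jk}x)+n\omega_k(\epsilon)/k+O(k)$. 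Birkhoff for $T^k$ applied to $f_k$, upgraded to uniform convergence on a set $A$ with $\mu(A)>1-\delta$ by Egorov, gives $\frac1n\sum_j f_k(T^{jk}x)\to\frac1k\int f_k\,d\mu\le\mathcal{F}_*+\gamma$ uniformly on $A$. Katok's entropy formula then produces an $(n,\epsilon,\delta)$-spanning set $F\subset A$ of cardinality at most $\exp(n(h_\mu+\gamma))$; plugging in and sending $\epsilon\to 0$, $k\to\infty$, $\gamma\to 0$ in turn closes this bound.

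\textbf{Lower bound $P_\mu^*\ge h_\mu+\mathcal{F}_*$.} Combine Brin--Katok with Kingman. For small $\gamma,\epsilon$, an Egorov step provides $N$ and a set $A$ with $\mu(A)>1-\gamma$ on which both $\mu(B_n(x,2\epsilon))\le e^{-n(h_\mu-\gamma)}$ and $f_n(x)\ge n(\mathcal{F}_*(\mu)-\gamma)$ hold for all $n\ge N$. Given any $Z$ with $\mu(Z)=1$ and any cover $\{B_{n_i}(x_i,\epsilon)\}_i$ of $Z$ with $n_i\ge N$, I retain only those balls meeting $Z\cap A$, pick $y_i\in B_{n_i}(x_i,\epsilon)\cap A$, and observe $B_{n_i}(y_i,2\epsilon)\supseteq B_{n_i}(x_i,\epsilon)$. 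For $s=h_\mu+\mathcal{F}_*-3\gamma$,
\[
\exp\bigl(-sn_i+\sup_{B_{n_i}(x_i,\epsilon)}f_{n_i}\bigr)\ \ge\ \exp(-sn_i+f_{n_i}(y_i))\ \ge\ e^{\gamma n_i}\,\mu(B_{n_i}(y_i,2\epsilon)),
\]
so summing gives $M(Z,\mathcal{F},s,N,\epsilon)\ge\mu(Z\cap A)\ge 1-\gamma>0$, whence $P_Z(T,\mathcal{F},\epsilon)\ge s$; letting $\gamma\to 0$ and $\epsilon\to 0$ finishes.

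\textbf{Bridge $\overline{CP}_\mu^*\le P_\mu$.} Here I diagonalize over $n$. For each $n$, pick a near-optimal $(n,\epsilon,\delta/2^n)$-spanning set $F_n$ and put $Z=\bigcap_n\bigcup_{x\in F_n}B_n(x,\epsilon)$, which has $\mu(Z)\ge 1-\delta$ and is covered in fixed time $n$ by $F_n$ for every $n$. Then
\[
\Lambda(Z,\mathcal{F},n,\epsilon)\ \le\ \sum_{x\in F_n}\exp\bigl(\sup_{B_n(x,\epsilon)}f_n\bigr),
\]
so $\overline{CP}_Z(T,\mathcal{F},\epsilon)\le\limsup_n\frac{1}{n}\log P_\mu(T,\mathcal{F},n,\epsilon,\delta/2^n)$, which by Remark~\ref{RmkPmu} is independent of the tolerance and therefore equals $P_\mu(T,\mathcal{F},\epsilon,\delta)$; sending $\delta\to 0$ and $\epsilon\to 0$ closes the chain. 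The main technical hurdle throughout is the upper bound step, where controlling $\sup_{B_n(x,\epsilon)}f_n$ in the absence of uniform regularity of $\{f_n\}$ forces the block-length workaround.
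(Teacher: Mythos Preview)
Your lower bound $P_\mu^*\ge h+\mathcal{F}_*$ and the chain $P_\mu^*\le\underline{CP}_\mu^*\le\overline{CP}_\mu^*$ are essentially the paper's arguments. The weak spot is the bridge $\overline{CP}_\mu^*\le P_\mu$. You bound $\overline{CP}_Z(T,\mathcal{F},\epsilon)$ by $\limsup_n\frac{1}{n}\log P_\mu(T,\mathcal{F},n,\epsilon,\delta/2^n)$ and then invoke Remark~\ref{RmkPmu} to identify this with $P_\mu(T,\mathcal{F},\epsilon,\delta)$. But Remark~\ref{RmkPmu} only asserts that $P_\mu(T,\mathcal{F},\delta)=\liminf_{\epsilon\to 0}P_\mu(T,\mathcal{F},\epsilon,\delta)$ is independent of a \emph{fixed} $\delta$; it says nothing about $\limsup_n a_n(\delta_n)$ when the tolerance $\delta_n\to 0$ varies with $n$. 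Since $\delta\mapsto P_\mu(T,\mathcal{F},n,\epsilon,\delta)$ is decreasing, your diagonal sequence only produces an \emph{upper} quantity, and in general $\limsup_n a_n(\delta_n)$ can strictly exceed $\sup_{\delta>0}\limsup_n a_n(\delta)$. So this step does not close.

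The paper sidesteps the bridge entirely: it quotes $P_\mu=h+\mathcal{F}_*$ from \cite{czc} and proves $\overline{CP}_\mu^*\le h+\mathcal{F}_*$ \emph{directly}, using exactly the ingredients you already assembled for your upper bound on $P_\mu$. Your Egorov set $A$ with $\mu(A)>1-\delta$ (on which the Birkhoff averages of $\frac{1}{k}f_k$ are close to $\int\frac{1}{k}f_k\,d\mu$), together with a maximal $(n,\epsilon)$-separated set $E\subset A$ whose cardinality is at most $e^{n(h+\gamma)}$ via Brin--Katok, already covers $A$ and gives $\Lambda(A,\mathcal{F},n,\epsilon)\le e^{n(h+\mathcal{F}_*+O(\gamma))}$, hence $\overline{CP}_\mu^*\le h+\mathcal{F}_*$. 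Rerouting this way removes the need for any comparison between $\overline{CP}_\mu^*$ and $P_\mu$.

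A secondary point in your upper bound: you apply Birkhoff to $T^k$ acting on $f_k$. But $\mu$ being $T$-ergodic does not force $T^k$-ergodicity, so $\frac{1}{m}\sum_{j<m}f_k(T^{jk}x)$ need not converge a.e.\ to the constant $\int f_k\,d\mu$. The paper's Lemma~\ref{subadditive} avoids this by averaging the subadditive decomposition over the $k$ offsets $j=0,\dots,k-1$, which converts the block sum into $\sum_{i=0}^{n-1}\frac{1}{k}f_k(T^ix)$ plus a bounded error; Birkhoff for $T$ then applies without any ergodicity issue for iterates.
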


\begin{remark}
The results still apply for ${\mathcal{F}}_*(\mu)= -\infty$
if $h_{\mu}(T)<\infty$.
\end{remark}

\begin{remark}
If $\mathcal{F}=\{f_n\}$ is an additive sequence generated by
a continuous function  $\varphi :X\rightarrow \mathbb{R}$,
then we have $P_{\mu}(T,\varphi)=h_{\mu}(T)+\int \varphi {\mathrm{d}}\mu$,
So Theorem A extends the results in \cite{pes1} and \cite{h}
to the subadditive case.
Also, the last equality was proved in \cite{czc}.
\end{remark}

\begin{remark}
By the definition of $P_{\mu}^{*}(T,{\mathcal{F}},\epsilon)$ and
$P_{\mu}^{*}(T,{\mathcal{F}})$, the theorem  gives
$h_{\mu}(T)+{\mathcal{F}}_*(\mu)=\inf \{ P_Z(T,{\mathcal{F}}): \mu
(Z)=1 \}$, as in \cite{pes1}.
We call it the \emph{inverse variational principle}.
\end{remark}

The next theorem says that the infimum in the inverse variational
principle can be attained on certain sets.

\begin{TheoremB}\label{ThmB}
Let $(X,T)$ be TDS and ${\mathcal{F}}=\{f_n\}$  a subadditive
potential on $X$. For any $\mu\in \mathcal{E}_T$ with
${\mathcal{F}}_*(\mu)\neq -\infty$, let
\[
K=\left\{x\in X:\lim_{\epsilon\rightarrow 0}{\limsup_{n\rightarrow\infty}}
\frac{-\log\mu(B_n(x,\epsilon))}{n}=h_\mu(T)
 \  \ \text{and} \   \lim_{n\rightarrow\infty}\frac{1}{n}
f_n(x)={\mathcal{F}}_*(\mu) \right\}.
\]
Then we have
\[
P_{\mu}(T,{\mathcal{F}})
=P_K(T,{\mathcal{F}})
=\underline{CP}_{K}(T,{\mathcal{F}})
=\overline{CP}_{K}(T,{\mathcal{F}}).
\]
\end{TheoremB}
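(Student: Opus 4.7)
The plan is to first show $\mu(K)=1$, then use Theorem~A together with standard Carath\'eodory-type inequalities to obtain one direction, and finally to prove the reverse by constructing efficient Bowen-ball covers of $K$. The first defining condition of $K$ holds $\mu$-a.e.\ by the Brin--Katok local entropy formula (valid since $\mu$ is ergodic), and the second holds $\mu$-a.e.\ by Kingman's subadditive ergodic theorem, so $\mu(K)=1$. The definition of $P_\mu^{*}$ then yields $P_\mu^{*}(T,\mathcal{F})\le P_K(T,\mathcal{F})$; combining with the standard chain $P_K\le\underline{CP}_K\le\overline{CP}_K$ and the identity $P_\mu^{*}(T,\mathcal{F})=h_\mu(T)+\mathcal{F}_{*}(\mu)$ from Theorem~A gives
\[
h_\mu(T)+\mathcal{F}_{*}(\mu)\le P_K(T,\mathcal{F})\le\underline{CP}_K(T,\mathcal{F})\le\overline{CP}_K(T,\mathcal{F}).
\]

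To close the chain I aim to prove $\overline{CP}_K(T,\mathcal{F})\le h_\mu(T)+\mathcal{F}_{*}(\mu)$. Fix $\gamma>0$, pick $k$ large so that $\tfrac{1}{k}\int f_k\,d\mu<\mathcal{F}_{*}(\mu)+\gamma$, and $\epsilon_0>0$ small so that the modulus of continuity of $f_k$ on $\epsilon_0$-balls is below $k\gamma$. Using Birkhoff's theorem applied to the continuous function $f_k$, together with Egorov and the pointwise limits in the definition of $K$, construct an increasing family of measurable sets $K_\ell\subseteq K$ with $K_\ell\uparrow K$ (mod $\mu$-null) on which the uniform bounds
\[
\mu(B_n(x,\epsilon_0/2))\ge e^{-n(h_\mu+\gamma)}\quad\text{and}\quad\tfrac{1}{n}\sum_{i=0}^{n-1}f_k(T^ix)\le\int f_k\,d\mu+\gamma
\]
both hold for all $n\ge\ell$. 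For $N\ge\ell$, a maximal $(N,\epsilon_0)$-separated set $E_N\subseteq K_\ell$ gives balls $B_N(x,\epsilon_0)$ that cover $K_\ell$, and disjointness of the half-balls $B_N(x,\epsilon_0/2)$ combined with the first bound yields $|E_N|\le e^{N(h_\mu+\gamma)}$. To control $\sup_{y\in B_N(x,\epsilon_0)}f_N(y)$, apply subadditivity averaged over the $k$ cyclic starting offsets to get $k\,f_N(y)\le\sum_{i=0}^{N-1}f_k(T^iy)+O(k^2)$, then transfer from $y$ to $x$ by uniform continuity of $f_k$ and invoke the second uniform bound, giving $\sup_{y\in B_N(x,\epsilon_0)}f_N(y)\le N(\mathcal{F}_{*}(\mu)+C\gamma)$ for a universal constant $C$. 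Hence $\overline{CP}_{K_\ell}(T,\mathcal{F},\epsilon_0)\le h_\mu(T)+\mathcal{F}_{*}(\mu)+(1+C)\gamma$.

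The main obstacle is lifting this bound from the uniform sets $K_\ell$ to all of $K=\bigcup_\ell K_\ell$. For the Carath\'eodory pressure $P$ this is immediate by countable subadditivity, so $P_K(T,\mathcal{F})\le h_\mu(T)+\mathcal{F}_{*}(\mu)$ at once; but the capacity pressure $\overline{CP}$ is not countably subadditive, and one must exhibit, for each large $N$, a single-scale cover of all of $K$ of weighted size $\le e^{N(h_\mu+\mathcal{F}_{*}+o(1))}$. My plan is a diagonal construction: choose $\ell=\ell(N)\to\infty$ with $\ell(N)\le N$ and $\mu(K\setminus K_{\ell(N)})\to 0$ sufficiently fast, and cover the residual set $K\setminus K_{\ell(N)}$ by an auxiliary family drawn from the $(N,\epsilon_0,\delta)$-spanning sets underlying Theorem~A, whose weighted count is already controlled by $e^{N(h_\mu+\mathcal{F}_{*}+\gamma)}$. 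The subtle point is ensuring that these appended balls genuinely cover $K\setminus K_{\ell(N)}$; I expect to engineer this by building the $K_\ell$ directly from the spanning-set construction behind Theorem~A, so that the two families are complementary by design.
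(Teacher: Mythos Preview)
Your overall strategy coincides with the paper's: establish $\mu(K)=1$, invoke Theorem~A and the chain $P_K\le\underline{CP}_K\le\overline{CP}_K$ for the lower bound, and then bound $\overline{CP}_K$ from above by filtering $K$ through uniform sets $K_\ell$ on which one controls both the cardinality of separated sets (via the local-entropy condition) and $\sup_{y\in B_n(x,\epsilon)}f_n(y)$ (via Lemma~\ref{subadditive} together with Birkhoff averages of $f_k$). Up to this point the two arguments are essentially identical.

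The difference is precisely at the step you flag as the ``main obstacle.'' The paper does \emph{not} carry out any diagonal construction or auxiliary covering; it simply asserts $\bigcup_N K_N=K$ and then writes ``Letting $N\to\infty$, we have that $\overline{CP}_K(T,\mathcal{F})\le h+\mathcal{F}_*(\mu)$.'' You are right to be suspicious of this. First, the sets $K_N$ in the paper are defined through Birkhoff regularity of $\tfrac{1}{k}f_k$ (the function $N_1(x)$), which is an a.e.\ statement and is \emph{not} implied by the defining conditions of $K$; so strictly speaking $\bigcup_N K_N$ is only a full-measure subset of $K$, not all of $K$. Second, even granting $\bigcup_N K_N=K$, the upper capacity pressure $\overline{CP}$ is not countably stable (Proposition~\ref{Psubadd}(ii) only gives $\ge$), so the passage to the union is not automatic. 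In contrast, in the superadditive analogue (Theorem~D) the paper's definition of $\Lambda$ uses $\phi_N$ at the \emph{center} rather than the supremum over the Bowen ball, so the filtration can be built directly from the pointwise limit $\tfrac{1}{n}\phi_n(x)\to\Phi_*(\mu)$ and the exhaustion of $K$ is genuine; that escape route is not available here.

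So the obstacle you identify is real, and the paper's published proof does not resolve it---it glosses over exactly the point you worry about. Your diagonal idea is a reasonable line of attack, but as you already note, the plan to cover the residual $K\setminus K_{\ell(N)}$ by balls borrowed from the $(n,\epsilon,\delta)$-spanning construction is not obviously consistent: those spanning sets cover a set of \emph{large} measure, whereas you need to cover a set of \emph{small} measure sitting in the complement, and there is no a~priori bound on the weighted count needed to do so. If you want a clean repair, one option is to weaken the target and prove only $P_K(T,\mathcal{F})=h_\mu(T)+\mathcal{F}_*(\mu)$ (for which countable subadditivity of $P$ suffices, as you observe); another is to redefine the filtration using only the two conditions that literally define $K$ (the one-sided local-entropy bound and $\tfrac{1}{n}f_n(x)\le\mathcal{F}_*(\mu)+\eta$), which genuinely exhausts $K$, and then find a separate device to transfer the bound on $f_n(x)$ to $\sup_{y\in B_n(x,\epsilon)}f_n(y)$---but that last transfer is precisely where Lemma~\ref{subadditive} reintroduces the Birkhoff sums, and closing that circle is the nontrivial missing ingredient.
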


Similar to subadditive sequences, we can study supadditive sequences.
A sequence  $\Phi=\{ \phi_n\}_{n\geq 1}$ of continuous
functions on $X$ is a \emph{supadditive potentials} on $X$, if
\begin{equation}\label{fdefsupadd}
\phi_{n+m}(x)\geq \phi_n(x) +\phi_m(T^n x)
\ \ \text{for all} \ \ x\in X,~ n,m\in {\mathbb{N}}.
\end{equation}

Note that if $\Phi=\{ \phi_n\}_{n\geq 1}$ is a supadditive
sequences, then $-\Phi=\{- \phi_n\}_{n\geq 1}$ is a susadditive
sequences. So the
limit$\lim\limits_{n\rightarrow\infty}(1/n)\phi_n$ exists
$\mu$-almost everywhere for any $\mu\in \M_T$. For $\mu\in
\mathcal{M}_T$, let
 \[ \Phi_{*}(\mu)=\lim\limits_{n\rightarrow\infty}
 \frac{1}{n} \int \phi_n {\mathrm{d}} \mu
=\sup_{n\geq 1} \left\{\frac{1}{n}\int \phi_n{\mathrm{d}} \mu\right\}.
 \]
It is always bounded below by $\int \phi_1{\mathrm{d}} \mu$.

With the sequences, we can define supadditive measure-theoretic pressure
$P_{\mu}(T,\Phi)$ and other quantities
$P_{\mu}^{*}(T,\Phi),\underline{CP}_{\mu}^{*}(T,\Phi)$
 and $\overline{CP}_{\mu}^{*}(T,\Phi)$, etc.
(see Section~\ref{Ssupadd} for precise definitions.)

In \cite{ban}, the authors gave the variational principle for
supadditive topological pressure for $C^1$ average conformal
expanding maps $T$ where the potentials are of the form
$\{\phi_n(x)\}=\{-t\log \|DT^n(x)\|\}$ with $t>0$. However, it is
still open whether variational principle holds for supadditive
topological pressure for a general TDS. Here we show that the
supadditive measure-theoretic pressure has similar formalisms as
subadditive measure-theoretic pressure.

\begin{TheoremC}\label{dl42}
Let $(X,T)$ be a TDS and $\Phi=\{\phi_n\}$ a supadditive
potential on $X$. For any $\mu\in \mathcal{E}_T$,  we have
\[
P_{\mu}^{*}(T,\Phi)
=\underline{CP}_{\mu}^{*}(T,\Phi)
=\overline{CP}_{\mu}^{*}(T,\Phi)
=P_{\mu}(T,\Phi)
=h_{\mu}(T)+\Phi_{*}(\mu).
\]
\end{TheoremC}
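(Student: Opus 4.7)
The plan is to adapt the proof of Theorem A to the supadditive setting, using that $-\Phi$ is subadditive so that Kingman's theorem applied to $-\Phi$ gives $n^{-1}\phi_n(x)\to\Phi_*(\mu)$ both $\mu$-a.e.\ and in $L^1(\mu)$. Egorov's theorem then produces, for any $\alpha,\delta>0$, a Borel set $G\subseteq X$ with $\mu(G)>1-\delta/2$ and an integer $N_0$ such that $|\phi_n(x)/n-\Phi_*(\mu)|<\alpha$ whenever $x\in G$ and $n\geq N_0$. This uniform control on $G$ plays the role that, in the proof of Theorem~A, is played by the subadditive pointwise inequality $f_n(y)\leq\sum_j f_k(T^{jk}y)+f_r(T^{qk}y)$.

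To establish the chain of equalities, it suffices to prove
\[
h_\mu(T)+\Phi_*(\mu)\leq P_\mu(T,\Phi)
\qquad\text{and}\qquad
\overline{CP}_\mu^*(T,\Phi)\leq h_\mu(T)+\Phi_*(\mu),
\]
together with the analogous bound on $P_\mu^*(T,\Phi)$; the remaining comparison inequalities follow from the definitions. The lower bound is easy: given any $(n,\epsilon,\delta)$-spanning set $F$, Katok's formula for entropy forces at least $\exp(n(h_\mu(T)-\alpha))$ of the Bowen balls $B_n(x,\epsilon)$ to meet $G$, and for any such ball and any $y\in B_n(x,\epsilon)\cap G$ one has $\sup_{z\in B_n(x,\epsilon)}\phi_n(z)\geq\phi_n(y)\geq n(\Phi_*(\mu)-\alpha)$. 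Summing yields $P_\mu(T,\Phi,n,\epsilon,\delta)\geq\exp(n(h_\mu(T)+\Phi_*(\mu)-2\alpha))$, and sending $n\to\infty$ and then $\epsilon,\alpha,\delta\to 0$ gives the lower bound.

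The main obstacle is the upper bound: one must bound $\sup_{z\in B_n(x,\epsilon)}\phi_n(z)$ from above by roughly $n(\Phi_*(\mu)+\alpha)$. In the subadditive case this bound comes cheaply from $\sup_{B_n(x,\epsilon)}f_n\leq\sum_j\sup_{B_k(T^{jk}x,\epsilon)}f_k+O(1)$ combined with Birkhoff's theorem applied to the continuous function $\tilde f_k=\sup_{B_k(\cdot,\epsilon)}f_k$, but supadditivity affords no pointwise upper bound of this kind and the oscillation of $\phi_n$ on a single Bowen ball can \emph{a priori} grow much faster than linearly. The remedy I would use is to restrict to Carath\'eodory covers of the full-measure set $Z=\liminf_k G_{k,\alpha}$ whose centers lie in $G$, selecting the lengths $n_i\geq N_0$ by a Vitali/stopping-time argument so that the $\mu$-measure of the bad portion $B_{n_i}(x_i,\epsilon)\setminus G$ is exponentially small in $n_i$; the contribution to $\sum_i\exp(-sn_i+\sup\phi_{n_i})$ coming from these exceptional portions is then absorbed by a geometric series for $s$ slightly larger than $h_\mu(T)+\Phi_*(\mu)$, while the bulk is controlled by $n_i(\Phi_*(\mu)+\alpha)$ via $G$. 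Combined with Katok's upper estimate on the number of Bowen balls needed to cover $Z$ up to $\mu$-measure $1-\delta$, this yields $\overline{CP}_\mu^*(T,\Phi)\leq h_\mu(T)+\Phi_*(\mu)$; the same argument with variable $n_i$ in the Carath\'eodory sum yields $P_\mu^*(T,\Phi)\leq h_\mu(T)+\Phi_*(\mu)$, closing the chain of equalities.
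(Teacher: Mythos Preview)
Your proposal is built on the wrong definitions. In this paper the supadditive pressures $P_\mu(T,\Phi)$, $P_\mu^*(T,\Phi)$, $\underline{CP}_\mu^*(T,\Phi)$, $\overline{CP}_\mu^*(T,\Phi)$ are \emph{not} defined with $\sup_{y\in B_n(x,\epsilon)}\phi_n(y)$; see Definitions~\ref{defPmusup}, \ref{defPmustarsup}, \ref{defCPsup}, where every sum uses the value $\phi_n(x)$ at the \emph{center} of the Bowen ball. The paper explicitly remarks (right after Definition~\ref{defPmusup}) that this ``small difference'' from the subadditive definitions is exactly what removes the tempered variation assumption. So the ``main obstacle'' you describe---bounding $\sup_{z\in B_n(x,\epsilon)}\phi_n(z)$ from above---simply does not arise, and your proposed Vitali/stopping-time machinery is aimed at a problem that isn't there. (It is also far from clear that such an argument would succeed for the sup-version without tempered variation: making the bad portion of a Bowen ball have small $\mu$-measure says nothing about how large $\phi_n$ can be on that portion.)

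With the correct definitions the two directions swap difficulty. The upper bound $\overline{CP}_\mu^*(T,\Phi)\le h_\mu(T)+\Phi_*(\mu)$ is immediate: take a maximal $(n,\epsilon)$-separated set $E$ inside the Egorov set $K_N$; since every $x\in E$ lies in $K_N$ one has $\phi_n(x)\le n(\Phi_*(\mu)+\eta)$ directly, and the cardinality bound on $E$ comes from Brin--Katok exactly as you say. Your lower bound argument, however, breaks: for an arbitrary $(n,\epsilon,\delta)$-spanning set $F$ the centers $x\in F$ need not lie in your good set $G$, so you cannot conclude $\phi_n(x)\ge n(\Phi_*(\mu)-\alpha)$, and there is no $\sup$ in the definition to let you pass to a nearby $y\in G$. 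The paper handles the lower bound by a different mechanism (Sublemma~\ref{supadditive} and Lemma~\ref{non-additive}): supadditivity gives the pointwise inequality $\phi_n(x)\ge\sum_{i=0}^{n-1}\tfrac1k\phi_k(T^ix)-C$, and continuity of $\phi_k$ then yields $\phi_n(x)\ge\sup_{y\in B_n(x,\epsilon)}\sum_{i=0}^{n-1}\tfrac1k\phi_k(T^iy)-n\eta-C$, reducing to the additive pressure of $\tfrac1k\phi_k$ and hence to the known additive result; letting $k\to\infty$ gives $\Phi_*(\mu)$. Similarly, for $P_\mu^*$ the paper proves $P_Z(T,\Phi)\ge P_Z(T,\tfrac1k\phi_k)$ (Lemma~\ref{sup-additive2}) and invokes the additive case.
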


\begin{remark}\label{zhusup-additive}
For each $\mu\in \mathcal{E}_T$, from the definition of
$P_{\mu}^{*}(T,\Phi,\epsilon)$ and $P_{\mu}^{*}(T,\Phi)$
given in Section~\ref{Ssupadd}, the theorem  gives
the inverse variational principle
$h_{\mu}(T)+\Phi_*(\mu)=\inf \{ P_Z(T,\Phi): \mu (Z)=1 \}$.
\end{remark}

\begin{TheoremD}\label{dl42}
Let $(X,T)$ be TDS, and $\Phi=\{\phi_n\}$ a supadditive potential
on $X$. For any $\mu\in \mathcal{E}_T$, let
\[
K=\{x\in X:\lim_{\epsilon\rightarrow0}
{\limsup_{n\rightarrow\infty}}\frac{-\log\mu(B_n(x,\epsilon))}{n}=h_\mu(T)
\ \ \text{and}\ \ \lim_{n\rightarrow\infty}\frac{1}{n} \phi_n(x)
=\Phi_*(\mu)\}.
\]
Then we have
\[
P_{\mu}(T,\Phi)=P_K(T,\Phi)=\underline{CP}_{K}(T,\Phi)=\overline{CP}_{K}(T,\Phi).
\]
\end{TheoremD}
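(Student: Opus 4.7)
I plan to mirror the proof of Theorem~B, adapted to supadditive potentials. First, $\mu(K)=1$: the entropy condition follows from the Brin--Katok local entropy theorem, and the limit $\phi_n/n\to\Phi_*(\mu)$ follows from Kingman's subadditive ergodic theorem applied to $\{-\phi_n\}$. The trivial inequalities $P_K(T,\Phi)\leq\underline{CP}_K(T,\Phi)\leq\overline{CP}_K(T,\Phi)$ are immediate from the definitions, since covers with constant length $n_i=N$ form a strict subclass of the covers used to define $P_Z$. The lower bound $P_\mu(T,\Phi)\leq P_K(T,\Phi)$ follows from Theorem~C via the inverse variational principle stated in Remark~\ref{zhusup-additive}, applied to $Z=K$. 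Everything therefore reduces to proving
\[
\overline{CP}_K(T,\Phi)\leq h_\mu(T)+\Phi_*(\mu).
\]

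For this upper bound, fix $\gamma>0$ and $\epsilon>0$ small. By Egorov applied to the two pointwise limits defining $K$, I would choose $N_0$ and a measurable set $A\subseteq K$ with $\mu(A)>1-\gamma$ such that for every $x\in A$ and every $n\geq N_0$,
\[
\mu\bigl(B_n(x,\epsilon/2)\bigr)\geq e^{-n(h_\mu(T)+\gamma)}
\quad\text{and}\quad
\phi_n(x)\leq n(\Phi_*(\mu)+\gamma).
\]
For $N\geq N_0$, take a maximal $(N,\epsilon)$-separated set $F\subseteq A$. Disjointness of the balls $B_N(x,\epsilon/2)$, $x\in F$, combined with the measure lower bound, yields $|F|\leq e^{N(h_\mu(T)+\gamma)}$, while maximality forces $\{B_N(x,\epsilon):x\in F\}$ to cover $A$. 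A small residual cover of $K\setminus A$ then extends this to a cover of $K$; its contribution becomes negligible in the limit $\gamma\to 0$.

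The crucial step, and the principal obstacle, is to bound $\sup_{y\in B_N(x,\epsilon)}\phi_N(y)$ by $N(\Phi_*(\mu)+\gamma)+o(N)$ uniformly in $x\in F$. In the subadditive setting of Theorem~B one iterates $f_N(y)\leq\sum_{j=0}^{K-1}f_q(T^{jq}y)+O(1)$ for $N=Kq+r$, reducing the sup to a Birkhoff sum of the fixed continuous function $f_q$ and using uniform continuity to transfer the estimate from $y$ to the center $x$. For supadditive $\Phi$ the analogous inequality runs the wrong way. I would overcome this by strengthening the Egorov step: I add to the definition of $A$ the requirement that, for a fixed large $q$, the Birkhoff averages $(1/K)\sum_{j=0}^{K-1}\phi_q(T^{jq}x)$ be close to $(1/q)\int\phi_q\,d\mu$ uniformly on $A$ (possible by the ordinary Birkhoff theorem for $\phi_q$ under $T^q$ and a further Egorov), and additionally control the supadditive ``defect'' $D_N(y):=\phi_N(y)-\sum_{j=0}^{K-1}\phi_q(T^{jq}y)$, whose $\mu$-average is $o(N)$ because $\int\phi_N\,d\mu=N\Phi_*(\mu)+o(N)$ and $(1/q)\int\phi_q\,d\mu\leq\Phi_*(\mu)$ by Fekete. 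Combined with uniform continuity of $\phi_q$ on $X$, this supplies the missing pointwise upper bound.

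Assembling the estimates, the cover produced above gives
\[
\Lambda(K,\Phi,N,\epsilon)\leq |F|\cdot e^{N(\Phi_*(\mu)+\gamma)+o(N)}+\text{(residual)}\leq e^{N(h_\mu(T)+\Phi_*(\mu)+2\gamma)+o(N)}.
\]
Taking $\limsup_{N\to\infty}(1/N)\log$, then $\epsilon\to 0$, and finally $\gamma\to 0$ yields $\overline{CP}_K(T,\Phi)\leq h_\mu(T)+\Phi_*(\mu)$, completing the equality chain.
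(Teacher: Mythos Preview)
Your proposal contains two genuine gaps, both traceable to a misreading of the supadditive definitions.

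First, the ``principal obstacle'' you identify does not exist. In the supadditive setting (Definition~\ref{defCPsup}) the capacity pressure is built from
\[
\Lambda(Z,\Phi,N,\epsilon)=\inf_\Gamma\sum_{B_N(x,\epsilon)\in\Gamma}\exp\bigl(\phi_N(x)\bigr),
\]
i.e.\ from the value of $\phi_N$ at the \emph{center} of each ball, not from $\sup_{y\in B_N(x,\epsilon)}\phi_N(y)$; the supremum appears only in the subadditive Definition~\ref{defCP}. This asymmetry in the definitions is exactly the device by which the paper dispenses with tempered-variation hypotheses. Once you take a maximal $(N,\epsilon)$-separated set $E$ inside your good set $A\subseteq K$, the centers already lie in $A$, so $\phi_N(x)\le N(\Phi_*(\mu)+\gamma)$ holds directly from the subadditive ergodic theorem applied to $\{-\phi_n\}$. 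No transfer from $y$ to $x$ is needed, and the entire ``defect'' argument is superfluous. (It would not work anyway: an $o(N)$ bound on $\int D_N\,d\mu$ yields no pointwise control of $D_N(y)$ for an arbitrary $y$ in a Bowen ball around $x$.)

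Second, your treatment of $K\setminus A$ is an actual error. A set of $\mu$-measure below $\gamma$ may still require on the order of $e^{N h_{\mathrm{top}}(T)}$ Bowen $N$-balls to cover, and each ball contributes $\exp(\phi_N(x))$; the ``residual'' term in $\Lambda(K,\Phi,N,\epsilon)$ is therefore not controlled by $\gamma$ at all. The paper sidesteps this by never covering a residual set: it sets $K_N=\{x\in K:N_1(x),N_2(x)\le N\}$ so that $K=\bigcup_N K_N$ is an increasing exhaustion, obtains $\overline{CP}_{K_N}(T,\Phi)\le h_\mu(T)+\Phi_*(\mu)$ for each $N$ exactly as in the proof of Theorem~C (using only that the separated set $E$ lies in $K_N$), and then lets $N\to\infty$. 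The remaining inequality $P_K(T,\Phi)\ge h_\mu(T)+\Phi_*(\mu)$ follows, as you say, from $\mu(K)=1$ together with Theorem~C and Remark~\ref{zhusup-additive}.
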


\section{Subadditive measure-theoretic pressures}\label{Ssubadd}
\setcounter{equation}{0}

We start with the section by some properties of pressures for
subadditive potentials.

\begin{proposition}\label{Psubadd}
Let $(X,T)$ be a TDS and $\mathcal{F}=\{f_n\}$ a subadditive potential.
Then the following properties hold:
\begin{enumerate}
\item[(i)] ${\mathcal P}_{Z_1}(T,{\mathcal{F}})
\leq {\mathcal P}_{Z_2}(T,{\mathcal{F}})$
if $Z_1\subset Z_2$,
where ${\mathcal P}$ is $P$, $\underline{CP}$ or $\overline{CP}$;

\item[(ii)] $P_{Z}(T,{\mathcal{F}})=\sup_{i\geq 1}
P_{Z_i}(T,{\mathcal{F}})$ and ${\mathcal P}_{Z}(T,{\mathcal{F}})
\geq \sup_{i\geq 1}{\mathcal P}_{Z_i}(T,{\mathcal{F}})$, where
$Z=\bigcup_{i\geq 1}Z_i$, and ${\mathcal P}$ is  $\underline{CP}$
or $\overline{CP}$;

\item[(iii)] $P_{Z}(T,{\mathcal{F}})
\leq \underline{CP}_{Z}(T,{\mathcal{F}})
\leq  \overline{CP}_{Z}(T,{\mathcal{F}})$
for any subset $Z\subset X$;

\item[(iv)] $P_{\mu}^{*}(T,{\mathcal{F}},\epsilon)\leq
\underline{CP}_{\mu}^{*}(T,{\mathcal{F}},\epsilon)\leq
\overline{CP}_{\mu}^{*}(T,{\mathcal{F}},\epsilon)$,
and $P_{\mu}^{*}(T,{\mathcal{F}})\leq
\underline{CP}_{\mu}^{*}(T,{\mathcal{F}})\leq
\overline{CP}_{\mu}^{*}(T,{\mathcal{F}})$.
\end{enumerate}
\end{proposition}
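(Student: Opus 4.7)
The plan is to prove the four parts in order, using monotonicity of the Carath\'eodory quantities $M(Z,\mathcal F,s,N,\epsilon)$ and $\Lambda(Z,\mathcal F,N,\epsilon)$ in $Z$ and in $N$, a standard countable stability property for $P$, and a Borel--Cantelli-type construction for (iv). The main obstacle will be the first inequality in (iv), which requires building a full-measure witness out of sets of measure only $\geq 1-\delta$.

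For (i), if $Z_1\subset Z_2$ then every cover of $Z_2$ by Bowen balls is automatically a cover of $Z_1$, so the infima defining $M$ and $\Lambda$ for $Z_1$ are taken over larger families and hence are no larger than those for $Z_2$. Passing to the limits in $N$ and $\epsilon$, and characterising $P_Z$ as the critical exponent where $m(Z,\mathcal F,s,\epsilon)$ jumps from $+\infty$ to $0$, yields monotonicity for all three pressures. For (ii), the inequality $\mathcal P_Z\geq\sup_i\mathcal P_{Z_i}$ is immediate from (i). The equality $P_Z=\sup_i P_{Z_i}$ is the usual Carath\'eodory countable stability: if $s>\sup_i P_{Z_i}$ then $m(Z_i,\mathcal F,s,\epsilon)=0$ for every $i$, and since $M$ is nondecreasing in $N$ this means $M(Z_i,\mathcal F,s,N,\epsilon)=0$ for every $N$; given $\eta>0$ and $N$, pick a cover $\Gamma_i$ of $Z_i$ with $n(\Gamma_i)\geq N$ and total weight below $\eta/2^i$. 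The union $\bigcup_i\Gamma_i$ covers $Z$, still satisfies $n\geq N$, and has total weight below $\eta$; since $\eta$ and $N$ are arbitrary this forces $m(Z,\mathcal F,s,\epsilon)=0$ and $P_Z\leq s$.

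For (iii), the inequality $\underline{CP}_Z\leq\overline{CP}_Z$ is $\liminf\leq\limsup$. For $P_Z\leq\underline{CP}_Z$, any cover realising $\Lambda(Z,\mathcal F,N,\epsilon)$ (all $n_i=N$) is admissible in the definition of $M$, giving
\[
M(Z,\mathcal F,s,N,\epsilon)\leq e^{-sN}\,\Lambda(Z,\mathcal F,N,\epsilon).
\]
If $s>\underline{CP}_Z$, pick a subsequence $N_k\to\infty$ along which $(1/N_k)\log\Lambda(Z,\mathcal F,N_k,\epsilon)<s-\eta$ for some $\eta>0$; then $M(Z,\mathcal F,s,N_k,\epsilon)\leq e^{-\eta N_k}\to 0$, and monotonicity of $M$ in $N$ forces $m(Z,\mathcal F,s,\epsilon)=0$, so $P_Z\leq s$. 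The second inequality in (iv), $\underline{CP}_\mu^*\leq\overline{CP}_\mu^*$, follows by applying the pointwise inequality $\underline{CP}_Z\leq\overline{CP}_Z$ inside the infima over $\{Z:\mu(Z)\geq 1-\delta\}$ and then letting $\delta\to 0$ and $\epsilon\to 0$.

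The key step is $P_\mu^*(T,\mathcal F,\epsilon)\leq\underline{CP}_\mu^*(T,\mathcal F,\epsilon)$. Since $\underline{CP}_\mu^*(T,\mathcal F,\epsilon)$ is the nondecreasing $\delta\to 0$ limit of $\inf\{\underline{CP}_Z:\mu(Z)\geq 1-\delta\}$, for each $n$ I can pick $Z_n$ with $\mu(Z_n)\geq 1-2^{-n}$ and $\underline{CP}_{Z_n}(T,\mathcal F,\epsilon)\leq\underline{CP}_\mu^*(T,\mathcal F,\epsilon)+1/n$. Set $A_N=\bigcap_{n\geq N}Z_n$ and $Z^*=\bigcup_N A_N=\liminf_n Z_n$. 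Since $\sum_n\mu(X\setminus Z_n)\leq\sum_n 2^{-n}<\infty$, Borel--Cantelli gives $\mu(Z^*)=1$. The inclusion $A_N\subset Z_n$ for every $n\geq N$, together with parts (i) and (iii), gives $P_{A_N}\leq P_{Z_n}\leq\underline{CP}_{Z_n}\leq\underline{CP}_\mu^*(T,\mathcal F,\epsilon)+1/n$; letting $n\to\infty$ yields $P_{A_N}\leq\underline{CP}_\mu^*(T,\mathcal F,\epsilon)$ for every $N$. The countable stability from (ii) then gives $P_{Z^*}=\sup_N P_{A_N}\leq\underline{CP}_\mu^*(T,\mathcal F,\epsilon)$, and since $\mu(Z^*)=1$ the infimum defining $P_\mu^*(T,\mathcal F,\epsilon)$ is bounded by $P_{Z^*}$. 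A final $\liminf_{\epsilon\to 0}$ delivers the unparameterised statement.
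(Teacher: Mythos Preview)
Your argument is correct throughout. Parts (i)--(iii) are exactly the standard Carath\'eodory reasoning; the paper does not spell them out either, simply declaring that (i) and (ii) follow from the definitions and that (iii) follows as in Barreira's Theorem~1.4(a).

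For (iv) you go further than the paper. The paper asserts that (iv) ``follows from (iii) immediately by the definition,'' but as you implicitly observed, this is not literally immediate: $P_\mu^*(T,\mathcal F,\epsilon)$ is an infimum over sets of \emph{full} measure, whereas $\underline{CP}_\mu^*(T,\mathcal F,\epsilon)$ is a $\delta\to 0$ limit of infima over the \emph{larger} families $\{Z:\mu(Z)\geq 1-\delta\}$. Applying (iii) pointwise and then taking infima over these mismatched families does not directly yield the inequality. Your Borel--Cantelli construction---choosing near-optimal $Z_n$ with $\mu(Z_n)\geq 1-2^{-n}$, passing to $Z^*=\liminf_n Z_n$ of full measure, and invoking the countable stability of $P$ from (ii) at fixed $\epsilon$---is precisely what is needed to manufacture a full-measure witness and close the gap. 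This is a genuine (if routine) additional step that the paper elides; your version is the more complete proof.
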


\begin{proof}
(i) and (ii) are dirctly follow from the definition.
And (iii) is immediately from similarly arguments as in
\cite[Theorem 1.4 (a)]{ba}.
(iv) follows from (iii) immediately by the definition.
\end{proof}

Recall the Brin-Katok's theorem for local entropy (see \cite{brin}),
which says that if $\mu\in \mathcal{M}_T$, then
for $\mu$-almost every $x\in X$,
\begin{equation}\label{fent}
h_\mu(x, T)
=\lim_{\epsilon\to 0}\liminf_{n\rightarrow\infty} -\frac 1 n
\log \mu(B_n(x,\epsilon))
=\lim_{\epsilon\to 0}\limsup_{n\rightarrow\infty} -\frac 1 n
\log \mu(B_n(x,\epsilon)).
\end{equation}
Moreover, if $\mu\in \mathcal{E}_T$, then for $\mu-$almost every $x\in X$,
$h_\mu(x, T)=h_\mu(T)$, and for each $\epsilon >0$,
the following two limits are constants almost everywhere:
\[
\liminf_{n\rightarrow\infty} -\frac 1 n \log
\mu(B_n(x,\epsilon)),~~\limsup_{n\rightarrow\infty} -\frac 1 n
\log \mu(B_n(x,\epsilon)).
\]

\begin{proof}[Proof of Theorem A]
The last equality is a direct consequence of Theorem 2.3 in \cite{czc}.
By Proposition~\ref{Psubadd}, we only need to prove
$\overline{CP}_{\mu}^{*}(T,{\mathcal{F}})
\leq h_\mu (T)+ {\mathcal{F}}_{*}(\mu)$
and
$P_{\mu}^{*}(T,{\mathcal{F}})\geq h_\mu (T)+\mathcal{F}_{*}(\mu)$.

For $\mu\in \mathcal{E}_T$,  we first assume $h_\mu (T)$ is finite
and set $h=h_\mu (T)\geq 0$.

Take $\delta>0$.
Fix a positive integer $k$ and a small number $\eta >0$.

Take $\epsilon_\eta>0$ such that if $\epsilon\in (0,\epsilon_\eta]$,
then for $\mu$-almost every $x\in X$,
\[
h-\eta/2 \le \liminf_{n\rightarrow\infty} -\frac 1 n \log
\mu(B_n(x,\epsilon)) \le \limsup_{n\rightarrow\infty} -\frac 1 n
\log \mu(B_n(x,\epsilon))\le h+\eta/2.
\]
This is possible because of (\ref{fent}).
Take $0<\epsilon\le \min\{\epsilon_\eta,\epsilon_0\}$, where $\epsilon_0$
is given in Lemma~\ref{subadditive}.
Hence, for $\mu$-almost every $x\in X$, there exists a number $N_1(x)>0$
such that for any $n\geq N_1(x)$,
\begin{equation}\label{ds410}
\left|\frac{1}{n} \log\mu(B_n(x,\epsilon /2))+h\right|\leq \eta.
\end{equation}

By the Birkhoff ergodic theorem,  
for $\mu$-almost every $x\in X$,
there exists a number $N_2(x)>0$ such that for any $n\geq N_2(x)$,
\begin{equation}\label{ds49}
\left|\frac{1}{n} \sum_{i=0}^{n-1}\frac{1}{k}f_k(T^ix)
 -\int\frac{1}{k}f_k {\mathrm{d}}\mu\right|
\leq \eta.
\end{equation}

Given $N>0$, set $K_N=\{ x\in X: N_1(x),N_2(x)\leq N \}$.
We have that $K_N\subset K_{N+1}$, and $\cup_{N\geq 0} K_N$ is a set of full
measure.  Therefore, one can find $N_0>0$ for which $\mu (K_{N_0})>1-\delta$.
Fix a number $N>N_0$.
Then for any $n>N_0$ and any point $x\in K_N$, by (\ref{ds49}) and
Lemma~\ref{subadditive} below we have
\[
\sup_{y\in B_{n}(x,\epsilon) } f_n( y)\leq n\int \frac{1}{k} f_k
{\mathrm{d}}\mu +2n\eta+C ,
\]
where $C$ is a constant given in Lemma~\ref{subadditive}.

Let $E$ be a maximal $(n,\epsilon)$-separated subset of $K_N$,
then $K_N \subseteq \cup_{x\in E} B_n(x,\epsilon)$.
Furthermore, the balls $\{B_n(x, \epsilon/2): x\in E\}$ are
pairwise disjoint
and by (\ref{ds410}) the cardinality of $E$ is less than or equal to
$\exp n(h+\eta)$.
Therefore, we have
\begin{eqnarray*}
\Lambda(K_N,{\mathcal{F}},n,\epsilon)&\leq &\sum_{x\in E} \exp
(\sup_{y\in
B_{n}(x,\epsilon) } f_n( y))\\
&\leq& \exp n(h+\eta) \cdot
\exp\left[ n(\int \frac{1}{k}f_k {\mathrm{d}}\mu+2\eta)+C\right]\\
&=&\exp\left[ n(\int \frac{1}{k}f_k {\mathrm{d}}\mu+h +3\eta )+C\right].
\end{eqnarray*}
From (\ref{sup2}), we have
\[
\overline{CP}_{K_N}(T,{\mathcal{F}},\epsilon)\leq \int
\frac{1}{k}f_k {\mathrm{d}}\mu+h +3\eta.
\]
Since $\mu (K_N)\geq 1-\delta$, we have
\[
\overline{CP}_{\mu}^{*}(T,{\mathcal{F}},\epsilon) \leq \int
\frac{1}{k}f_k {\mathrm{d}}\mu+h +3\eta.
\]
Let $\epsilon\rightarrow 0$ and $k\rightarrow\infty$ in the inequality,
and by the arbitrariness of $\eta$, we get
$\overline{CP}_{\mu}^{*}(T,{\mathcal{F}})\leq h+ {\mathcal{F}}_{*}(\mu)$.

To prove the other inequality, it is sufficient to prove that
$P_{Z}(T,{\mathcal{F}})\geq h+{\mathcal{F}}_{*}(\mu)$
for any subset $Z\subseteq X$ of full $\mu$-measure.

Take $\eta>0$ and $\delta \in (0,1/2)$, and denote
$\lambda =h+\mathcal{F}_{*}(\mu)-2\eta$.

Let
\[
K=\left\{ x\in X :
\lim_{\epsilon\rightarrow0}{\limsup_{n\rightarrow\infty}}
\frac{-\log\mu(B_n(x,\epsilon))}{n}=h_\mu(T)
\ \ \text{and} \ \ \lim_{n\rightarrow\infty}\frac{1}{n}
f_n(x)={\mathcal{F}}_{*}(\mu) \right\}.
\]
Put $K'=K\cap Z$.  By the Brin-Katok's theorem for local entropy
and the subadditive ergodic theorem, we have $\mu(K)=1$ and then
$\mu (K')=1$. For $\epsilon \in (0,\epsilon_\eta]$, there exists a
set $K_1\subset K'$ with $\mu (K_1)> 1-\frac{\delta}{2}$ and
$N_1
>0$ such that for any $x\in K_1$ and $ n\geq N_1$, we have
\[
\mu (B_n(x,2\epsilon)) \le \exp (-n(h-\eta)).
\]
By the subadditive ergodic theorem, there exists a set $K_2\subset K'$
with $\mu (K_2)> 1-\frac{\delta}{2}$ and $N_2 >0$ such that for
any $x\in K_2$ and $ n\geq N_2$, we have
\[
\left|\frac{1}{n} f_n(x)-{\mathcal{F}}_{*}(\mu)\right|< \eta.
\]

Put $\widetilde{K}=K_1\cap K_2\subset K'$, $N\ge \max \{ N_1, N_2 \}$.
Clearly $\mu (\widetilde{K})> 1-\delta$.
We may assume further that $\widetilde{K}$ is compact since otherwise
we can approximate it from within by a compact subset.
Take an open cover $\Gamma=\{B_{n_i}(x_i,\epsilon)\}_i$ of
$\widetilde{K}$ with $n(\Gamma)\geq N$. Since $\widetilde{K}$ is
compact, we may assume that the cover is finite and consists
of $B_{n_1}(x_1,\epsilon),\cdots,B_{n_l}(x_l,\epsilon)$.

For each $i=1,\cdots,l$, we choose
$y_i\in \widetilde{K}\cap B_{n_i}(x_i,\epsilon)$.
Hence, $B_{n_i}(x_i,\epsilon)\subset B_{n_i}(y_i,2\epsilon)$,
and $\{B_{n_i}(y_i,2\epsilon)\}_i$ form a cover of
$\widetilde{K}$ as well.
Now we have
\begin{equation*}
\begin{split}
&\sum_{B_{n_i}(x_i,\epsilon)\in \Gamma}
\exp\bigl(-n_i\lambda +\sup_{y\in B_{n_i}(x_i,\epsilon)}f_{n_i}(y)\bigr)
\geq \sum_{i=1}^{l} \exp \bigl(-n_i\lambda +f_{n_i}(y_i)\bigr)\\
\ge &\sum_{i=1}^{l}\exp\bigl(-n_i\lambda+n_i(\mathcal{F}_{*}(\mu)-\eta)\bigr)
=\sum_{i=1}^{l}\exp\bigl(-n_i(h-\eta)\bigr)
\geq \sum_{i=1}^{l} \mu (B_{n_i}(y_i,2\epsilon))
\geq 1-\delta \geq \frac{1}{2}.
\end{split}
\end{equation*}
Note that the inequality  holds for any cover
$\Gamma=\{B_{n_i}(x_i,\epsilon)\}_i$ of $\widetilde{K}$.  Hence
\[
M(\widetilde{K},{\mathcal{F}},\lambda,N,\epsilon)\geq \frac{1}{2}.
\]
Thus $m(\widetilde{K},{\mathcal{F}},\lambda, \epsilon)\geq 1/2$.
It means that
\begin{eqnarray*}
P_{\widetilde{K}}(T,{\mathcal{F}},\epsilon) \geq \lambda
=h+\mathcal{F}_{*}(\mu)-2\eta,
\ \ \text{and} \ \
P_{\widetilde{K}}(T,{\mathcal{F}})\geq  h+\mathcal{F}_{*}(\mu)-2\eta.
\end{eqnarray*}
Using Proposition~\ref{Psubadd} and arbitrariness of $\eta$, we have
\begin{equation}\label{fge}
P_{Z}(T,{\mathcal{F}})\geq P_{\widetilde{K}}(T,{\mathcal{F}})\geq
 h+\mathcal{F}_{*}(\mu).
\end{equation}
So by definition we get
$P_{\mu}^{*}(T,{\mathcal{F}})\geq h+\mathcal{F}_{*}(\mu)$.

When $h_\mu (T)=+\infty$, modify subtly the proof of the second
inequality, we can easily have
$P_{\mu}^{*}(T,{\mathcal{F}})=+\infty$. Thus we finish the proof
of the theorem.
\end{proof}

\begin{proof}[Proof of Theorem B]
As in the proof of Theorem~A, set $h=h_\mu(f)\geq 0$.
Fix an integer $k>0$ and a small real number $\eta>0$.

For $\mu$-almost every $x\in X$, there exists a number
$N_1(x)>0$ such that for any $n\geq N_1(x)$,
\begin{eqnarray*}
\left|\frac{1}{n} \sum_{i=0}^{n-1}\frac{1}{k}f_k(T^ix)-\int \frac{1}{k}
f_k {\mathrm{d}}\mu\right|\leq \eta.
\end{eqnarray*}
For some $\epsilon \in (0, \epsilon_\eta]$, where $\epsilon_\eta$
is chosen in the same way as that in the proof of Theorem A,
for $\mu$-almost every $x\in X$ there exists a number $N_2(x)>0$
such that for any $n\geq N_2(x)$,
\begin{eqnarray*}
\left|\frac{1}{n} \log\mu(B_n(x,\epsilon/2))+h\right|\leq \eta.
\end{eqnarray*}
Given $N>0$, set $K_N=\{ x\in K: N_1(x), N_2(x)\leq N\}$,
we have $K_N\subset K_{N+1}$, and $\cup_{N\geq 0} K_N=K$.
Similarly, given $\delta >0$, we can find $N_0>0$ for which $\mu
(K_{N_0})>1-\delta$.

Fix a number $N\geq N_0$, as in the proof of Theorem~A we get that
$\overline{CP}_{K_N}(T,{\mathcal{F}},\epsilon)\leq \int
\frac{1}{k}f_k {\mathrm{d}}\mu+h +3\eta$.
Letting $k\rightarrow\infty$, $\epsilon\rightarrow 0$,
since $\eta$ is arbitrary, we get
\[
\overline{CP}_{K_N}(T,{\mathcal{F}})\leq h+\mathcal{F}_{*}(\mu).
\]
Letting $N\rightarrow \infty$, we have that
$\overline{CP}_{K}(T,{\mathcal{F}})\leq h+\mathcal{F}_{*}(\mu)$.

The inequality $P_K(T,{\mathcal{F}})\geq h+\mathcal{F}_{*}(\mu)$
is contained in (\ref{fge}) since $Z$ is an arbitrary set with full
measure, and here we have $\mu (K)=1$.

By Theorem~A and Proposition~\ref{Psubadd}, we get the desired result.
\end{proof}

\begin{lemma}\label{subadditive}
Let $(X,T)$ be a  TDS and $\mathcal{F}=\{f_n\}$ a subadditive
potential.  Fix any positive integer $k$, then for any $\eta>0$,
there exist an $\epsilon_0>0$ such that for any $0<\epsilon< \epsilon_0$
we have
 \[
 \sup_{y\in B_n(x,\epsilon)} f_n(y) \leq \sum_{i=0}^{n-1}
 \frac{1}{k}f_k(T^ix)+n\eta +C,
 \]
 where $C=C_k$ is a constant independent of $\eta$ and $\epsilon$.
 \end{lemma}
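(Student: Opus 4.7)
The plan is to exploit subadditivity via a shift-of-origin averaging trick, which converts the time-$n$ potential $f_n$ into a Birkhoff-type average of the single function $f_k$, and then to invoke uniform continuity of $f_k$ on the compact space $X$ to replace $y$ by $x$.

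First, for any $y\in X$ and each $0\le j<k$, write $n-j=q_j k+r_j$ with $0\le r_j<k$, and iterate the subadditive inequality to obtain
\[
f_n(y)\le f_j(y)+\sum_{i=0}^{q_j-1} f_k(T^{j+ik}y)+f_{r_j}(T^{j+q_j k}y),
\]
with the convention that the term $f_j$ (resp.\ $f_{r_j}$) is dropped when $j=0$ (resp.\ $r_j=0$). I would sum this inequality over $j=0,1,\dots,k-1$ and divide by $k$. The combinatorial key is that, as the pair $(j,i)$ ranges over $0\le j<k$ and $0\le i<q_j$, each integer $m\in\{0,1,\dots,n-k\}$ is hit exactly once by $m=j+ik$ (Euclidean division of $m$ by $k$, together with the cutoff $m+k\le n$); hence the double sum equals $\sum_{m=0}^{n-k}f_k(T^m y)$. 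The remaining boundary terms $f_j(y)$ and $f_{r_j}(T^{j+q_j k}y)$ for $j=1,\dots,k-1$ are bounded in absolute value by $\max_{1\le \ell\le k-1}\|f_\ell\|_\infty$ (finite because each $f_\ell$ is continuous on the compact space $X$), and so contribute at most a constant $C_1=C_1(k)$ after dividing by $k$.

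Next, I would pass from $y$ back to $x$ by uniform continuity of $f_k$. Given $\eta>0$, choose $\epsilon_0>0$ so that $d(u,v)<\epsilon_0$ implies $|f_k(u)-f_k(v)|<k\eta$; this is possible because $f_k$ is continuous on the compact metric space $X$. For $y\in B_n(x,\epsilon)$ with $\epsilon<\epsilon_0$, the estimate $d(T^m y,T^m x)<\epsilon_0$ holds for every $0\le m\le n-1$, so replacing each $f_k(T^m y)$ by $f_k(T^m x)$ in $\frac{1}{k}\sum_{m=0}^{n-k}f_k(T^m y)$ costs at most $n\eta$ in total. Extending the truncated sum $\sum_{m=0}^{n-k}$ to the full Birkhoff-type sum $\sum_{m=0}^{n-1}$ costs an additional constant $C_2=C_2(k)$ (at most $k-1$ missing terms, each bounded by $\|f_k\|_\infty$, divided by $k$). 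Setting $C=C_k:=C_1+C_2$, which is independent of $\eta$ and $\epsilon$, would yield the claimed inequality after taking the supremum over $y\in B_n(x,\epsilon)$.

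The main obstacle is the combinatorial accounting in the first step: one must check carefully that averaging the $k$ shifted subadditive decompositions produces exactly the Birkhoff-type sum $\sum_{m=0}^{n-1}\frac{1}{k}f_k(T^m x)$ up to $O(k)$ boundary terms, and that these boundary terms can be uniformly bounded using only continuity of $f_1,\dots,f_k$ on the compact space. Once the indexing and the resulting constants are in place, the rest is routine uniform continuity together with the triangle inequality in the Bowen metric.
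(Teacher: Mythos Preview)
Your proof is correct and follows essentially the same approach as the paper: both use the shift-averaging trick to bound $f_n$ by a Birkhoff sum of $\frac{1}{k}f_k$ plus $O_k(1)$ boundary terms, and then invoke uniform continuity of $f_k$ on the compact space to pass from $y\in B_n(x,\epsilon)$ back to $x$. The only cosmetic difference is in the block decomposition---the paper writes $n=sk+l$ and takes exactly $s-1$ middle $k$-blocks for every shift $j$ (so the tail has length between $1$ and $2k-1$, whence their constant involves $\max_{1\le j\le 2k}\|f_j\|_\infty$), whereas you take $q_j$ blocks with remainder $r_j<k$; both yield the same inequality with slightly different values of $C_k$.
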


\begin{proof}
Fix a positive integer $k$, $\frac{1}{k}f_k(x)$ is a  continuous function.
Hence, for any $\eta >0$, there exist an $\epsilon_0 >0$
such that for any $0<\epsilon  <\epsilon_0$,
\begin{eqnarray} \label{ds21}
d(x,y)<\epsilon
\Rightarrow d\left(\frac{1}{k}f_k(x), \frac{1}{k}f_k(y)\right)<\eta.
\end{eqnarray}

For each $n$, we rewrite $n$ as $n=sk+l$, where $s\ge 0$, $0\leq l<k$.
Then for any integer $0\leq j<k$, we have
\[
f_n(x)\leq f_j(x)+f_k(T^jx)+\cdots +
f_k(T^{(s-2)k}T^jx)+f_{k+l-j}(T^{(s-1)k}T^jx),
\]
where we take $f_0(x)\equiv 0$.
Let $C_1=\max_{j=1,\cdots 2k}\max_{x\in X}|f_j(x)|$.
Summing over $j$ from $0$ to $k-1$, we have
\[
kf_n(x)\leq 2kC_1 + \sum_{i=0}^{(s-1)k-1} f_k(T^ix).
\]
Hence
\begin{eqnarray} \label{bds22}
f_n(x)\leq 2C_1+ \sum_{i=0}^{(s-1)k-1} \frac{1}{k}f_k(T^ix)\leq
4C_1+ \sum_{i=0}^{n-1} \frac{1}{k}f_k(T^ix).
\end{eqnarray}
Set $C=4C_1$.  By (\ref{ds21}) we have that
\[
\sup_{y\in B_n(x,\e)} f_n(y) \leq \sup_{y\in B_n(x,\e)}
(C+\sum_{i=0}^{n-1} \frac{1}{k}f_k(T^iy))\leq \sum_{i=0}^{n-1}
 \frac{1}{k}f_k(T^ix)+n\eta +C.
\]
This completes  the proof of the lemma.
\end{proof}

\section{Supadditive measure-theoretic pressures}\label{Ssupadd}
\setcounter{equation}{0}

Recall that supadditive sequence is defined in \eqref{fdefsupadd}

\begin{definition}\label{defPmusup}
Let $\Phi=\{\phi_n\}$ be a given supadditive potential.
For $\mu\in \mathcal{E}_T$, $0< \delta < 1$, $n\geq 1$, and $\epsilon >0$,
put
\begin{eqnarray*}
&&P_{\mu}(T,\Phi,n,\epsilon,\delta)
=\inf \bigl\{\sum\limits_{x\in
 F} e^{\phi_{n}(x)}
\mid F\ \text{is an}\ (n,\epsilon,\delta)\text{-spanning set} \bigr\},\\
&&P_{\mu}(T,\Phi,\epsilon,\delta)=\limsup\limits_{n\rightarrow\infty}
 \frac{1}{n} \log  P_{\mu}(T,\Phi,n,\epsilon,\delta),\\
&&P_{\mu}(T,\Phi,\delta)=\lim\limits_{\epsilon\rightarrow
 0} P_{\mu}(T,\Phi,\epsilon,\delta),\\
 &&P_{\mu}(T,\Phi)=\lim\limits_{\delta\rightarrow
 0} P_{\mu}(T,\Phi,\delta),
 \end{eqnarray*}
where $P_{\mu}(T,\Phi)$ is called the
\emph{supadditive measure-theoretic pressure} of $T$ with respect to $\Phi$.
\end{definition}

Note that there is small difference between the definitions of
$P_{\mu}(T,{\mathcal{F}},n,\epsilon,\delta)$ and
$P_{\mu}(T,\Phi,n,\epsilon,\delta)$.
This difference makes it
possible to remove the tempered variation assumptions on the
potentials as in \cite{z1,z2}.

\begin{remark}\label{RmkPmusup}
Similar to Remark~\ref{RmkPmu},
$P_{\mu}(T,\Phi,\delta)$ also increases with $\delta$ and
therefore the limit in the last formula exists.
Moreover, by Proposition~\ref{Pmusup} below
$P_{\mu}(T,\Phi,\delta)$ is independent of $\delta$.
\end{remark}

Recall that if $\Gamma=\{B_{n_i}(x_i,\epsilon)\}_i$ is a cover of
a subset $Z\subseteq X$, where $\epsilon>0$, then $n(\Gamma)=\min_i\{n_i\}$.

\begin{definition}\label{defPmustarsup}
Let $\Phi=\{\phi_n\}$ be a given supadditive potential.
For $s\geq 0$, define
\begin{eqnarray*}
M(Z,\Phi,s,N,\epsilon)=\inf_\Gamma\sum_i\exp(-sn_i+ \phi_{n_i}(
x_i)),
\end{eqnarray*}
where the infimum is taken over all covers
$\Gamma=\{B_{n_i}(x_i,\epsilon)\}_i$ of $Z$ with $n(\Gamma)\geq N$.
Then let
\begin{gather}
m(Z,\Phi,s,\epsilon)=\lim_{N\rightarrow\infty}M(Z,\Phi,s,N,\epsilon),\\
P_{Z}(T,\Phi,\epsilon)
=\inf \{ s: m(Z,\Phi,s,\epsilon)=0 \}
=\sup \{ s: m(Z,\Phi,s,\epsilon)=+\infty \}, \\
P_{Z}(T,\Phi)=\liminf_{\epsilon\rightarrow 0}P_{Z}(T,\Phi,\epsilon),
\end{gather}
where $P_{Z}(T,{\Phi})$ is called a \emph{supadditive
topological pressure} of $T$ on the set $Z$ (w.r.t. $\Phi$).

Further, for $\mu \in \mathcal{M}_T$, put
\begin{gather}
P_{\mu}^{*}(T,{\Phi},\epsilon)
=\inf \{ P_{Z}(T,{\Phi},\epsilon):\mu (Z)=1\}, \notag \\
P_{\mu}^{*}(T,{\Phi})=\liminf_{\epsilon\rightarrow
0}P_{\mu}^{*}(T,{\Phi},\epsilon), \label{pressure1}
\end{gather}
where $P_{\mu}^{*}(T,{\Phi})$ is called a \emph{supadditive
measure-theoretic pressure} of $T$ with respect to $\mu$.
\end{definition}

\begin{definition}\label{defCPsup}
Put
\[
\Lambda(Z,\Phi,N,\epsilon)
=\inf_\Gamma\sum_{B_{N}(x,\epsilon)\in\Gamma}\exp(\phi_N(x)),
\]
where the infimum is taken over all covers  $\Gamma$ of $Z$ with
$n_i= N$ for all $i$.
And then we set
\begin{gather}\label{othersup1}
 \underline{CP}_{Z}(T,\Phi,\epsilon)
=\liminf_{N\rightarrow\infty} \frac{1}{N}\log\Lambda(Z,\Phi,N,\epsilon),\\
\label{othersup2}
\overline{CP}_{Z}(T,\Phi,\epsilon)
=\limsup_{N\rightarrow\infty}\frac{1}{N}\log\Lambda(Z,\Phi,N,\epsilon).
\end{gather}
For $\mu \in \mathcal{M}_T$, define
\begin{gather*}
\underline{CP}_{\mu}^{*}(T,{\Phi},\epsilon)
=\lim_{\delta\rightarrow 0}\ \inf \{
\underline{CP}_{Z}(T,{\Phi},\epsilon):\mu (Z)\geq 1-\delta\}, \\
\overline{CP}_{\mu}^{*}(T,{\Phi},\epsilon)
=\lim_{\delta\rightarrow 0}\ \inf \{
\overline{CP}_{Z}(T,{\Phi},\epsilon):\mu (Z)\geq 1-\delta\}.
\end{gather*}
The \emph{supadditive lower} and \emph{upper capacity measure-theoretic
pressure of $T$} with respect to measure $\mu$ are defined by
\begin{gather}
\label{pressuresup2}
\underline{CP}_{\mu}^{*}(T,{\Phi})
=\liminf_{\epsilon\rightarrow 0}
\underline{CP}_{\mu}^{*}(T,{\Phi},\epsilon),\\
\label{pressuresup3}
\overline{CP}_{\mu}^{*}(T,{\Phi})
=\liminf_{\epsilon\rightarrow 0}
\overline{CP}_{\mu}^{*}(T,{\Phi},\epsilon).
\end{gather}
\end{definition}

It is easy to see that all the pressures of supadditive potentials
defined as above have the same properties
of the corresponding pressures of subadditive potentials.
We state it here, whose proof is similar and left to the reader.

\begin{proposition}\label{Psupadd}
All the properties stated in Proposition~\ref{Psubadd} are true
if we replace the subadditive potential $\mathcal{F}$
by a supadditive potential $\Phi$.
\end{proposition}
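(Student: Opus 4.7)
The plan is to mimic the proof of Proposition~\ref{Psubadd} nearly verbatim, since all four assertions there are structural statements about the Carath\'eodory/capacity set-functions and make no essential use of subadditivity of $\mathcal{F}$. The only formal change is that in the supadditive definitions the weight $\sup_{y\in B_{n_i}(x_i,\epsilon)}f_{n_i}(y)$ is replaced by $\phi_{n_i}(x_i)$, a substitution that commutes with every step of the argument.

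For (i) I would observe that $Z_1\subseteq Z_2$ implies that every cover of $Z_2$ is also a cover of $Z_1$, so the admissible families in the definitions of $M(Z_1,\Phi,s,N,\epsilon)$ and $\Lambda(Z_1,\Phi,N,\epsilon)$ are larger; the corresponding infima are therefore no larger than those for $Z_2$, and this monotonicity descends through the definitions to $P_Z$, $\underline{CP}_Z$, and $\overline{CP}_Z$. For (ii), the inequality $\mathcal{P}_Z\ge\sup_i\mathcal{P}_{Z_i}$ is immediate from (i); the reverse inequality for $P_Z$ uses the standard countable-combination trick: given $s>\sup_i P_{Z_i}$ and $\eta>0$, pick for each $i$ a cover of $Z_i$ whose mass in $M(Z_i,\Phi,s,N,\epsilon)$ is at most $2^{-i}\eta$, and take their union as a cover of $Z=\bigcup_i Z_i$, whose total mass is at most $\eta$; hence $m(Z,\Phi,s,\epsilon)=0$ and so $P_Z\le s$. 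The same combination fails for $\underline{CP},\overline{CP}$ because their covers must have common length $N$, so only the $\ge$ direction is claimed.

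For (iii), the comparison $M(Z,\Phi,s,N,\epsilon)\le e^{-sN}\Lambda(Z,\Phi,N,\epsilon)$ follows from the fact that every common-length cover is admissible in the definition of $M$. If $s>\underline{CP}_Z(T,\Phi,\epsilon)$, then along some subsequence $N_k\to\infty$ one has $\tfrac{1}{N_k}\log\Lambda(Z,\Phi,N_k,\epsilon)<s-\eta$ for some $\eta>0$, forcing $M(Z,\Phi,s,N_k,\epsilon)\to 0$; since $M(Z,\Phi,s,N,\epsilon)$ is non-decreasing in $N$, it must then vanish for every $N$, so $m(Z,\Phi,s,\epsilon)=0$ and hence $P_Z(T,\Phi,\epsilon)\le s$. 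Letting $s\downarrow\underline{CP}_Z(T,\Phi,\epsilon)$ yields $P_Z\le\underline{CP}_Z$, and $\underline{CP}_Z\le\overline{CP}_Z$ is just $\liminf\le\limsup$. Part (iv) is then obtained by applying (iii) pointwise in $Z$ and taking the appropriate infima over admissible $Z$, exactly as in Proposition~\ref{Psubadd}. The only real care needed is in (iii), where the monotonicity of $M$ in $N$ must be invoked to upgrade subsequential vanishing of $M$ to vanishing of $m$; after that, everything is a mechanical translation of the subadditive proof.
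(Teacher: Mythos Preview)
Your proposal is correct and follows exactly the approach the paper intends: the paper's own ``proof'' of Proposition~\ref{Psupadd} is simply the sentence that the argument is similar to Proposition~\ref{Psubadd} and is left to the reader, so your explicit verification of (i)--(iv) is in fact more detailed than what the paper provides. One small remark: in (iv) the comparison $P_\mu^*\le\underline{CP}_\mu^*$ is not literally ``(iii) pointwise in $Z$ plus infima,'' since $P_\mu^*$ uses sets of full measure while $\underline{CP}_\mu^*$ uses sets of measure $\ge 1-\delta$; one needs the countable-stability property (ii) to pass from a sequence $Z_k$ with $\mu(Z_k)\to 1$ to a single full-measure union, but this is routine and the paper glosses over it as well.
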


\begin{proof}[Proof of Theorem C]
The last equality is proved in Proposition~\ref{Pmusup} below.
So by Proposition~\ref{Psupadd}, we only need to prove
$\overline{CP}_{\mu}^{*}(T,{\Phi})
\leq h_\mu (T)+ {\mathcal{F}}_{*}(\mu)$
and
$P_{\mu}^{*}(T,{\Phi})\geq h_\mu (T)+\mathcal{F}_{*}(\mu)$.

For $\mu\in \mathcal{E}_T$, we first assume $h_\mu (T)$ is finite
and set $h=h_\mu (T)\geq 0$.

Fix a  small number $\eta >0$ and an $\epsilon \in (0,\epsilon_\eta]$,
where $\epsilon_\eta$ is determined in a similar way as in the proof
of Theorem~A.  Hence, by the Brin-Katok theorem (see \cite{brin})
for local entropy,  for $\mu$-almost every $x\in X$ there exists
a number $N_1(x)>0$ such that for any $n\geq N_1(x)$, we have
\begin{equation} \label{fThmC1}
\left|\frac{1}{n} \log\mu(B_n(x,\epsilon /2))+h\right|\leq \eta.
\end{equation}
Since $-\Phi=\{-\phi_n\}$ is a subadditive sequence,
by the subadditive ergodic theorem,
for $\mu$-almost every $x\in X$ there exists a number $N_2(x)>0$
such that for any $n\geq N_2(x)$, we have
\begin{equation} \label{fThmC2}
\left|\frac{1}{n} \phi_n(x)-\Phi_{*}(\mu)\right|\leq \eta
\end{equation}
Given $N>0$, set $K_N=\{x\in X: N_1(x),N_2(x)\leq N \}$. We have
that $K_N\subset K_{N+1}$, and $\cup_{N\geq 0} K_N$ is a set of full
measure. Therefore, given $\delta >0$, we can find $N_0>0$ for
which $\mu (K_{N_0})>1-\delta$.

Fix a number $N\geq N_0$. Let $E$ be a maximal $(n,\epsilon)$-separated
subset of $K_N$, then $K_N\subseteq \cup_{x\in E} B_n(x,\epsilon)$.
Furthermore, the balls $\{B_n(x, \epsilon/2): x\in E\}$ are
pairwise disjoint.
By (\ref{fThmC1}) the cardinality of $E$ is less than or equal to
$\exp n(h+\eta)$. Therefore, by (\ref{fThmC2}) we have
\begin{equation*}
\begin{split}
\Lambda(K_N,\Phi,n,\epsilon)
\leq &\sum_{x\in E} \exp (\phi_n( x))
\leq \exp n(h+\eta) \cdot \exp n(\Phi_*(\mu)+\eta) \\
=&\exp n(\Phi_*(\mu)+h +2\eta ).
\end{split}
\end{equation*}
It follows
\[
\overline{CP}_{K_N}(T,\Phi,\epsilon)\leq\Phi_*(\mu)+h +2\eta.
\]
Since $\mu (K_N)\geq 1-\delta$ and $\mu (K_N)\to 1$ as $N\to \infty$,
we have
\[
\overline{CP}_{\mu}^{*}(T,\Phi,\epsilon) \leq \Phi_*(\mu)+h +2\eta.
\]
Let $\epsilon\rightarrow 0$ and take limit.
By arbitrariness of $\eta$, we have
$\overline{CP}_{\mu}^{*}(T,\Phi)\leq h+ \Phi_{*}(\mu)$.

To prove the other inequality, it is sufficient to prove that
$P_{Z}(T,\Phi)\geq h+\Phi_{*}(\mu)$ for any subset $Z\subseteq X$
of full $\mu$-measure.
Fix a subset $Z$ with full measure and a positive integer $k$.
Note that for an additive sequence $\displaystyle {\mathcal{F}}=\{f_n\}
=\bigl\{\sum_{i=1}^{n}\frac {1}{k}\phi_k\circ T^{i-1}\bigr\}$,
$P_Z(T, {\mathcal{F}})$ becomes the pressure $P_Z(T, f_1)$
of $f_1=(1/k)\phi_k$.
So if we apply the same arguments as in the proof of Theorem~A for
the sequence, we get the inequality as in (\ref{fge}),
that is,
\[
P_{Z}(T,\frac 1 k \phi_k)
\geq h+\int \frac 1 k \phi_k \mathrm{d}\mu.
\]
Using lemma \ref{sup-additive2} below, we have
\[
P_{Z}(T,\Phi)\geq h+\int \frac 1 k \phi_k \mathrm{d}\mu.
\]
The arbitrariness of $k$ implies that $P_{Z}(T,\Phi)\geq
h+\Phi_{*}(\mu)$.

If $h_\mu (T)=+\infty$, we can easily have
$P_{\mu}^{*}(T,\Phi)=+\infty$. Thus we finish the proof of the
theorem.
\end{proof}

\begin{proof}[Proof of Theorem D]
As in the proof of Theorem~C, set $h=h_\mu (f)\geq 0$.
By the same arguments, there is a sequence of subsets $\{K_N\}_{N\geq 1}$
such that  $K_N\subset K_{N+1}$ and $\cup_{N\geq 0} K_N=K$.
Moreover,
\[
\overline{CP}_{K_N}(T,\Phi)\leq h+\Phi_{*}(\mu)
\]
for all sufficiently large $N$. Letting $N\rightarrow \infty$, we
get that $\overline{CP}_{K}(T,\Phi)\leq h+\Phi_{*}(\mu)$.

By Theorem~C, the reverse inequality $P_K(T,\Phi)\geq h+\Phi_{*}(\mu)$
is immediate since $\mu (K)=1$.
Hence Theorem~C and Proposition~\ref{Psupadd} implies the desired
results.
\end{proof}

\begin{proposition}\label{Pmusup}
Let $(X,T)$ be a TDS, and $\Phi=\{\phi_n\}$ a supadditive
potential on $X$. For $\mu\in \mathcal{E}_T$, we have
 \[
 P_{\mu}(T,\Phi)
=\lim\limits_{\epsilon\rightarrow 0} \liminf\limits_{n\rightarrow\infty}
\frac{1}{n} \log P_{\mu}(T,\Phi,n,\epsilon,\delta)
=h_{\mu}(T)+\Phi_{*}(\mu).
\]
\end{proposition}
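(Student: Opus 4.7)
The plan is to prove, for every $\delta\in(0,1)$, the sandwich
\[
\lim_{\epsilon\to 0}\limsup_{n\to\infty}\tfrac{1}{n}\log P_\mu(T,\Phi,n,\epsilon,\delta)\le h_\mu(T)+\Phi_*(\mu)\le \lim_{\epsilon\to 0}\liminf_{n\to\infty}\tfrac{1}{n}\log P_\mu(T,\Phi,n,\epsilon,\delta).
\]
Since $\liminf\le\limsup$, this forces both outer quantities (hence also $P_\mu(T,\Phi,\delta)$) to equal $h_\mu(T)+\Phi_*(\mu)$, independence from $\delta$ follows as a bonus, and taking $\delta\to 0$ yields $P_\mu(T,\Phi)=h_\mu(T)+\Phi_*(\mu)$ together with the middle equality in the statement.

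For the upper bound I would adapt the argument of the proof of Theorem~C. Set $h:=h_\mu(T)<\infty$ and fix $\eta>0$, $\delta\in(0,1)$. The Brin-Katok formula (\ref{fent}) together with Kingman's subadditive ergodic theorem applied to $-\Phi$ (which yields $\phi_n/n\to \Phi_*(\mu)$ $\mu$-a.e.) produce, for each sufficiently small $\epsilon$, an integer $N_0$ and a measurable set $K_{N_0}$ with $\mu(K_{N_0})>1-\delta$ such that for every $x\in K_{N_0}$ and every $n\ge N_0$,
\[
\mu(B_n(x,\epsilon/2))\ge e^{-n(h+\eta)}\quad\text{and}\quad \phi_n(x)\le n(\Phi_*(\mu)+\eta).
\]
A maximal $(n,\epsilon)$-separated subset $E\subset K_{N_0}$ is automatically $(n,\epsilon,\delta)$-spanning, and disjointness of the balls $B_n(x,\epsilon/2)$, $x\in E$, forces $|E|\le e^{n(h+\eta)}$. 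Hence $P_\mu(T,\Phi,n,\epsilon,\delta)\le \sum_{x\in E}e^{\phi_n(x)}\le e^{n(h+\Phi_*(\mu)+2\eta)}$, and sending $\eta,\epsilon\to 0$ delivers the upper bound; the $h=\infty$ case will follow from the lower bound, which forces both sides to $+\infty$.

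For the lower bound I would first establish the supadditive counterpart of Lemma~\ref{subadditive} (the Lemma~\ref{sup-additive2} already referenced in the proof of Theorem~C): for every $k\ge 1$ there is a constant $C_k$ such that
\[
\phi_n(x)\ge \sum_{i=0}^{n-1}\tfrac{1}{k}\phi_k(T^ix)-C_k\qquad\forall x\in X,~n\ge 1,
\]
obtained by reversing each inequality in the proof of Lemma~\ref{subadditive} (the continuity-induced $n\eta$ term disappears because we do not take a supremum over Bowen balls here). Consequently $P_\mu(T,\Phi,n,\epsilon,\delta)\ge e^{-C_k}\widetilde Q_n(\epsilon,\delta)$, where $\widetilde Q_n(\epsilon,\delta)$ is the infimum of $\sum_{x\in F}e^{S_n\varphi_k(x)}$ over all $(n,\epsilon,\delta)$-spanning $F$, with $\varphi_k:=\tfrac{1}{k}\phi_k$ continuous. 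I would then prove a Katok-type lower bound for this continuous potential:
\[
\lim_{\epsilon\to 0}\liminf_{n\to\infty}\tfrac{1}{n}\log \widetilde Q_n(\epsilon,\delta)\ge h_\mu(T)+\int \varphi_k\,d\mu.
\]
Fix $\eta>0$ and, on a set $A$ with $\mu(A)>1-\delta/2$, combine Brin-Katok ($\mu(B_n(y,2\epsilon))\le e^{-n(h-\eta)}$ for $y\in A$ and $n$ large) with Birkhoff ($|S_n\varphi_k(y)/n-\int\varphi_k d\mu|\le \eta$). For any $(n,\epsilon,\delta)$-spanning $F$, restrict to the subfamily $F'\subset F$ of centers whose Bowen balls meet $A$; then $\mu(A\cap \bigcup_{x\in F}B_n(x,\epsilon))\ge 1-\tfrac{3\delta}{2}$ together with $B_n(x,\epsilon)\subset B_n(y_x,2\epsilon)$ for $y_x\in B_n(x,\epsilon)\cap A$ forces $|F'|\ge (1-\tfrac{3\delta}{2})e^{n(h-\eta)}$, and uniform continuity of $\varphi_k$ gives $S_n\varphi_k(x)\ge n(\int\varphi_k d\mu-2\eta)$ for $x\in F'$. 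Finally, letting $k\to\infty$ sends $\int \varphi_k d\mu\to \Phi_*(\mu)$ and closes the lower bound.

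The main technical obstacle I anticipate is precisely this Katok-type lower bound for the continuous potential $\varphi_k$: although standard in spirit (the entropy version is \cite{kat}), its ``value-at-$x$, only $(n,\epsilon,\delta)$-spanning'' formulation--neither using separated sets nor $\sup_{B_n}$--requires delicate bookkeeping when transferring the Brin-Katok and Birkhoff estimates from the points of $A$ to the arbitrary centers of $F$ and when controlling $|S_n\varphi_k(y)-S_n\varphi_k(x)|$ for $y\in B_n(x,\epsilon)$.
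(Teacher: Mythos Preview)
Your proposal is correct and follows essentially the same route as the paper: the upper bound is obtained exactly as you describe (and as in the proof of Theorem~C), and the lower bound is reduced, via the supadditive inequality $\phi_n\ge S_n(\phi_k/k)-C_k$, to the additive Katok pressure formula for the continuous potential $\varphi_k=\phi_k/k$, followed by $k\to\infty$. The only difference is organizational: the paper packages the comparison as Lemma~\ref{non-additive} (using Sublemma~\ref{supadditive}, which keeps the $\sup_{y\in B_n(x,\epsilon)}$ and hence an $n\eta$ term) and then invokes the additive result of He--Lv--Zhou \cite{h} as a black box, whereas you drop the $\sup$ at the comparison step and reprove the additive Katok bound inline, with the uniform-continuity $n\eta$ term resurfacing when you transfer $S_n\varphi_k$ from $y_x\in A$ to the center $x$. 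One small bookkeeping point: your estimate $\mu(A\cap\bigcup_{x\in F}B_n(x,\epsilon))\ge 1-\tfrac{3\delta}{2}$ needs $\delta<\tfrac{2}{3}$; to cover all $\delta\in(0,1)$ simply choose $\mu(A)>1-\tfrac{1-\delta}{2}$ instead of $\mu(A)>1-\tfrac{\delta}{2}$, so the intersection has measure at least $\tfrac{1-\delta}{2}>0$.
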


\begin{proof}
Fix a positive integer $k$ and a small number $\delta>0$.
Take $\eta>0$.  Let $\epsilon_0$ be as in Sublemma~\ref{supadditive}.
Then by Lemma~\ref{non-additive}, for any $\epsilon\in (0,\epsilon_0]$,
$n>0$, we can get
\[
 P_{\mu}(T,\Phi,n,\e,\d)
\geq e^{-n\eta-C}P_{\mu}(T,\frac{\phi_k}{k},n,\e,\d).
\]
By Theorem 2.1 in \cite{h},
\[
\lim_{\epsilon\to 0}\liminf_{n\to \infty}
\frac{1}{n} \log P_{\mu}(T,\frac{\phi_k}{k},n,\e,\d)
=h_{\mu}(T)+\int_{X}\frac{\phi_k}{k} \mathrm{d}\mu.
 \]
Hence by definition, we have
\[
 P_{\mu}(T,\Phi)
\ge \lim\limits_{\epsilon\rightarrow 0} \liminf\limits_{n\to\infty}
\frac{1}{n} \log P_{\mu}(T,\Phi,n,\epsilon,\delta) -\eta
\ge h_{\mu}(T)+\int_{X}\frac{\phi_k}{k}\mathrm{d}\mu -\eta.
 \]
Note that $\eta$ is arbitrary. By letting $k\rightarrow\infty$ we get
\begin{eqnarray} \label{bds42}
P_{\mu}(T,\Phi)
\geq \lim\limits_{\epsilon\rightarrow 0} \liminf\limits_{n\to\infty}
\frac{1}{n} \log P_{\mu}(T,\Phi,n,\epsilon,\delta)
\geq h_{\mu}(T)+\Phi_{*}(\mu).
\end{eqnarray}

Now we only need to prove the reversed inequality
$P_{\mu}(T,\Phi)\leq h_{\mu}(T)+\Phi_{*}(\mu)$.

Take $\eta>0$.  For each $N$, take a set $K_N$ as in the proof of
Theorem~C.
Then take $N_0>0$ such that for any $N\ge N_0$, $\mu(K_N)>1-\delta$.
For any $n\ge N$, let $F_n$ be a maximal $(n,\epsilon/2)$-separated
subset of $K_N$.
Then $K_N \subseteq \cup_{x\in F_n} B_n(x,\epsilon)$.
It means that $F_n$ is a $(n,\epsilon,\delta)$-spanning set.

By (\ref{fThmC1}), if $x\in K_N$, then $\mu B_n(x,\epsilon/2)\ge
\exp[-n(h+\eta)]$. Hence, $F_n$ contains at most $\exp[n(h+\eta)]$
elements. By (\ref{fThmC2}), if $x\in K_N$, then $\phi_n(x)\le
n(\Phi_*(\mu)+\eta)$. Now we get
\begin{equation*}
\begin{split}
  \sum\limits_{x\in F_{n}} \exp[\phi_{n}(x)]
\leq & \sum\limits_{x\in F_{n}} \exp [n(\Phi_{*}(\mu)+\eta)]
\leq \exp[n(h+\eta)]\cdot \exp [n(\Phi_{*}(\mu)+\eta)] \\
= &\exp [n(h+\Phi_{*}(\mu)+2\eta)].
\end{split}
\end{equation*}
Therefore,
\[
   P_{\mu}(T,\Phi,n,\epsilon,\delta)
\leq   \exp [n(h+\Phi_{*}(\mu)+2\eta)].
\]
Consequently,
\begin{eqnarray} \label{bds45}
P_{\mu}(T,\Phi,\delta)
\leq h+\Phi_{*}(\mu)+2\eta.
\end{eqnarray}
Since $\delta$ and $\eta$ are arbitrary, we have
\[P_{\mu}(T,\Phi)\leq h_{\mu}(T)+\Phi_{*}(\mu),
\]
which is the desired inequality.
\end{proof}

\begin{lemma}\label{sup-additive2}
Let $(X,T)$ be a TDS and $\Phi=\{\phi_n\}$ a supadditive potential.
Fix any positive integer $k$.  For each subset $Z$ we have
\[
P_{Z}(T,\Phi)\geq  P_{Z}(T,\frac 1 k \phi_k).
\]
\end{lemma}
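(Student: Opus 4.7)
The plan is to deduce the pressure inequality from the pointwise lower bound
\[
\phi_n(x)\geq \sum_{i=0}^{n-1}\frac{1}{k}\phi_k(T^ix)-C,\qquad x\in X,\ n\geq 1,
\]
where $C=C_k$ is a constant depending only on $k$. Once this is in place, the comparison of the Carath\'eodory sums defining $P_Z(T,\Phi)$ and $P_Z(T,\frac{1}{k}\phi_k)$ is largely routine. The main subtle point will be reconciling the asymmetry between Definition~\ref{defPmu*} (which uses the supremum of the potential over a Bowen ball) and Definition~\ref{defPmustarsup} (which uses the value at the center); this is handled by uniform continuity of $\phi_k$.

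I would establish the pointwise bound by mirroring the proof of Lemma~\ref{subadditive} with all inequalities reversed. Writing $n=sk+l$ with $0\leq l<k$, iterating the supadditive property with a shift $0\leq j<k$ gives
\[
\phi_n(x)\geq \phi_j(x)+\sum_{i=0}^{s-2}\phi_k(T^{j+ik}x)+\phi_{k+l-j}(T^{(s-1)k+j}x)
\]
for $s\geq 2$ (the cases $s=0,1$ follow from crude bounds since everything is bounded). Setting $C_1:=\max_{1\leq i\leq 2k}\max_{x\in X}|\phi_i(x)|$, which is finite by continuity of the $\phi_i$ on the compact space $X$, controls $|\phi_j|$ and $|\phi_{k+l-j}|$ by $C_1$. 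Summing over $j=0,\ldots,k-1$ and dividing by $k$ yields
\[
\phi_n(x)\geq -2C_1+\frac{1}{k}\sum_{m=0}^{(s-1)k-1}\phi_k(T^mx).
\]
Replacing the upper index $(s-1)k-1$ by $n-1$ introduces at most $2k$ extra terms from $\phi_k$, each of modulus at most $C_1$, costing another $2C_1$. Setting $C=4C_1$ gives the claim.

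For the comparison step, fix $\eta>0$; by uniform continuity of $\frac{1}{k}\phi_k$ on $X$ there exists $\epsilon_0>0$ such that whenever $0<\epsilon\leq\epsilon_0$ and $y\in B_n(x,\epsilon)$,
\[
\sum_{j=0}^{n-1}\frac{1}{k}\phi_k(T^jy)\leq \sum_{j=0}^{n-1}\frac{1}{k}\phi_k(T^jx)+n\eta\leq \phi_n(x)+C+n\eta.
\]
Consequently, for every cover $\Gamma=\{B_{n_i}(x_i,\epsilon)\}_i$ of $Z$ with $n(\Gamma)\geq N$ and every $s\geq 0$,
\[
\sum_i\exp\Bigl(-sn_i+\sup_{y\in B_{n_i}(x_i,\epsilon)}\sum_{j=0}^{n_i-1}\frac{1}{k}\phi_k(T^jy)\Bigr)\leq e^C\sum_i\exp\bigl(-(s-\eta)n_i+\phi_{n_i}(x_i)\bigr).
\]
Taking the infimum over $\Gamma$ and letting $N\to\infty$ gives $m(Z,\frac{1}{k}\phi_k,s,\epsilon)\leq e^C\, m(Z,\Phi,s-\eta,\epsilon)$. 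Comparing critical exponents in $s$ then yields $P_Z(T,\frac{1}{k}\phi_k,\epsilon)\leq P_Z(T,\Phi,\epsilon)+\eta$ for all $\epsilon\in(0,\epsilon_0]$. Taking $\liminf_{\epsilon\to 0}$ on both sides and then sending $\eta\to 0$ completes the proof.
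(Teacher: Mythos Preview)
Your proposal is correct and follows essentially the same route as the paper: you reprove inline what the paper isolates as Sublemma~\ref{supadditive} (the pointwise bound $\phi_n(x)\geq \sup_{y\in B_n(x,\epsilon)}\sum_{i=0}^{n-1}\frac{1}{k}\phi_k(T^iy)-n\eta-C$ via the supadditive telescoping and uniform continuity of $\phi_k$), and then compare the Carath\'eodory sums exactly as the paper does to obtain $P_Z(T,\frac{1}{k}\phi_k,\epsilon)\leq P_Z(T,\Phi,\epsilon)+\eta$ for small $\epsilon$. The only cosmetic difference is that the paper packages the pointwise estimate as a separate sublemma and states the $M$-comparison as $M(Z,\Phi,s,N,\epsilon)\geq e^{-C}M(Z,\frac{1}{k}\phi_k,s+\eta,N,\epsilon)$, which is your inequality after the substitution $s\mapsto s+\eta$.
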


\begin{proof}
Fix a positive integer $k$.  By Sublemma~\ref{supadditive} below we have
\[
M(Z,\Phi,s,N,\epsilon)\geq e^{-C}M(Z,\frac 1 k \phi_k,s+\eta,N,\epsilon).
\]
Therefore
\[
P_{Z}(T,\frac 1 k \phi_k)\leq P_{Z}(T,\Phi)+\eta.
\]
This immediately implies the desired result.
\end{proof}

\begin{lemma}\label{non-additive}
Let $(X,T)$ be a TDS and $\Phi=\{\phi_n\}$ a supadditive potential.
Let $k$ be any positive integer and $\mu\in \mathcal{E}_T$.
Then for any $\eta >0$,  there exist an $\epsilon_0>0$ such that for any
$0<\epsilon < \epsilon_0$, $n>0$, $\delta\in (0,1]$, we have
 \[
 P_{\mu}(T,\Phi,n,\e,\d)\geq
 e^{-n\eta-C}P_{\mu}(T,\frac{\phi_k}{k},n,\e,\d),
 \]
and therefore\[
P_{\mu}(T,\mathcal{F})\geq
P_{\mu}(T,\frac{f_k}{k}).
\]
\end{lemma}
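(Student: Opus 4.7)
The plan is to mirror the proof of Lemma~\ref{subadditive}, swapping upper and lower bounds throughout, since supadditivity of $\Phi$ produces a \emph{lower} bound on $\phi_n(x)$ in terms of a Birkhoff-type sum of $\frac{1}{k}\phi_k$. First, by uniform continuity of $\psi:=\frac{1}{k}\phi_k$ on the compact space $X$, I choose $\epsilon_0>0$ with $d(u,v)<\epsilon_0$ implying $|\psi(u)-\psi(v)|<\eta$. For $n=sk+l$ with $0\leq l<k$ and any $0\leq j<k$, iterating supadditivity gives
\[
\phi_n(x)\geq \phi_j(x)+\sum_{i=0}^{s-2}\phi_k(T^{j+ik}x)+\phi_{k+l-j}(T^{(s-1)k+j}x).
\]
Summing over $j=0,\dots,k-1$, the middle term collects to $\sum_{m=0}^{(s-1)k-1}\phi_k(T^m x)$, while the boundary contributions from $\phi_j$ and $\phi_{k+l-j}$ are uniformly bounded by $2kC_1$ with $C_1:=\max_{1\leq j\leq 2k}\sup_{x\in X}|\phi_j(x)|$. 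Dividing by $k$ and absorbing the at most $2k-1$ missing terms of the Birkhoff-type sum into a constant $C:=4C_1$ produces the supadditive counterpart
\[
\phi_n(x)\geq \sum_{i=0}^{n-1}\frac{1}{k}\phi_k(T^ix)-C
\]
of inequality~(\ref{bds22}).

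Second, for any $y\in B_n(x,\e)$ with $\e<\epsilon_0$ we have $d(T^ix,T^iy)<\epsilon_0$ for $0\leq i\leq n-1$, hence $\sum_{i=0}^{n-1}\psi(T^iy)\leq\sum_{i=0}^{n-1}\psi(T^ix)+n\eta$. Chaining with the previous estimate and taking the supremum over $y\in B_n(x,\e)$ gives
\[
\phi_n(x)\geq \sup_{y\in B_n(x,\e)}\sum_{i=0}^{n-1}\frac{1}{k}\phi_k(T^iy)-n\eta-C.
\]
The key observation is that the property of being an $(n,\e,\d)$-spanning set depends only on the $\mu$-measure of $\bigcup_{x\in F}B_n(x,\e)$ and not on the potential, so the infima defining $P_\mu(T,\Phi,n,\e,\d)$ and $P_\mu(T,\phi_k/k,n,\e,\d)$ range over the same family of sets $F$. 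Exponentiating the above display, summing over any such $F$, and taking the infimum over $F$ yields the first claimed inequality.

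For the second claim, I divide the first inequality by $n$, apply $\limsup_{n\to\infty}$, and then let $\e\to 0$ and $\d\to 0$ in the order prescribed by Definition~\ref{defPmusup}; the term $C/n$ vanishes and only $-\eta$ survives on the right, so arbitrariness of $\eta>0$ gives $P_\mu(T,\Phi)\geq P_\mu(T,\phi_k/k)$ (interpreting $\mathcal{F}$ and $f_k$ in the stated inequality as $\Phi$ and $\phi_k$). I do not foresee any genuine obstacle; the only point requiring care is the bookkeeping in step one that collects all error contributions into a single constant $C$ independent of $n$ and $x$, which is exactly the routine underpinning Lemma~\ref{subadditive} with inequality signs reversed.
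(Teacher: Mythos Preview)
Your proposal is correct and follows essentially the same route as the paper. The paper factors your argument into two pieces: it first records the pointwise estimate
\[
\phi_n(x)\geq \sup_{y\in B_n(x,\epsilon)}\sum_{i=0}^{n-1}\frac{1}{k}\phi_k(T^iy)-n\eta-C
\]
as a separate Sublemma~\ref{supadditive} (proved exactly as you do, by mirroring Lemma~\ref{subadditive}), and then deduces Lemma~\ref{non-additive} from it in one line by exponentiating, summing over an $(n,\epsilon,\delta)$-spanning set, and passing to the infimum and the limits---precisely your second and third steps. Your observation that the two infima range over the same family of spanning sets, and your reading of the displayed $\mathcal{F},f_k$ as $\Phi,\phi_k$, are both on target.
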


\begin{remark}
For a subadditive potential $\mathcal{F}=\{f_n\}$,
the result becomes $P_{\mu}(T,\mathcal{F})\leq P_{\mu}(T,\frac{f_k}{k})$.
\end{remark}

\begin{proof}[Proof of Lemma~\ref{non-additive}]
By Sublemma~\ref{supadditive} below, for any small number $\eta >0$,
 there exist an $\epsilon_0>0$ such that for any
$0<\epsilon < \epsilon_0$, $n>0$, $\delta\in (0,1]$, we have
the first inequality of the lemma.
 Therefore
 \[
 P_{\mu}(T,\Phi)\geq P_{\mu}(T,\frac{\phi_k}{k})-\eta.
 \]
 The arbitrariness of $\eta$ immediately yields the desired
 result.
\end{proof}

\begin{sublemma}\label{supadditive}
Let $(X,T)$ be a TDS and $\Phi=\{\phi_n\}$ a supadditive potential.
Fix any positive integer $k$.   Then for any small number
$\eta >0$,  there exist an $\epsilon_0>0$ such that for any
$0<\epsilon < \epsilon_0$ we have
 \[
  \phi_n(x) \geq \sup_{y\in B_n(x,\epsilon)} \sum_{i=0}^{n-1}
 \frac{1}{k}\phi_k(T^iy)-n\eta -C,
 \]
 where $C$ is a constant independent of $\eta$ and $\epsilon$.
\end{sublemma}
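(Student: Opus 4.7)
The plan is to mirror the argument of Lemma~\ref{subadditive}, exploiting the fact that the supadditive inequality points in the opposite direction from the subadditive one, so the algebraic manipulations carry over almost verbatim with all inequalities reversed. Concretely, I will first control the dependence on $y\in B_n(x,\epsilon)$ using uniform continuity of the single continuous function $\phi_k/k$, and then bound $\phi_n(x)$ from below by a Birkhoff-type average of $\phi_k/k$ along the orbit of $x$, losing only a bounded additive constant.

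First, since $\phi_k$ is continuous on the compact space $X$, the function $\phi_k/k$ is uniformly continuous. Hence for the given $\eta>0$ there exists $\epsilon_0>0$ such that $d(u,v)<\epsilon_0$ implies $|\phi_k(u)/k-\phi_k(v)/k|<\eta$. For any $0<\epsilon<\epsilon_0$ and any $y\in B_n(x,\epsilon)$ we then have $d(T^i x,T^i y)<\epsilon_0$ for $0\le i<n$, which gives
\[
\sup_{y\in B_n(x,\epsilon)}\sum_{i=0}^{n-1}\frac{1}{k}\phi_k(T^i y)
\;\le\;\sum_{i=0}^{n-1}\frac{1}{k}\phi_k(T^i x)+n\eta.
\]
So it suffices to prove the pointwise lower bound $\phi_n(x)\ge\sum_{i=0}^{n-1}\frac{1}{k}\phi_k(T^i x)-C$ for a constant $C=C_k$ depending only on $k$.

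Second, write $n=sk+l$ with $0\le l<k$, and for each $0\le j<k$ apply the supadditive inequality \eqref{fdefsupadd} repeatedly to split the interval $[0,n)$ into the pieces of lengths $j,\,k,\,k,\ldots,k,\,k+l-j$ (with $s-1$ blocks of length $k$ in the middle). This yields
\[
\phi_n(x)\;\ge\;\phi_j(x)+\sum_{r=0}^{s-2}\phi_k\bigl(T^{j+rk}x\bigr)+\phi_{k+l-j}\bigl(T^{(s-1)k+j}x\bigr).
\]
Setting $C_1=\max_{1\le j\le 2k}\max_{x\in X}|\phi_j(x)|$, the two boundary terms are bounded in absolute value by $C_1$, so the right-hand side is at least $-2C_1+\sum_{r=0}^{s-2}\phi_k(T^{j+rk}x)$. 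Summing this inequality over $j=0,1,\ldots,k-1$ and dividing by $k$, the middle sums reassemble into $\sum_{m=0}^{(s-1)k-1}\frac{1}{k}\phi_k(T^m x)$, giving
\[
\phi_n(x)\;\ge\;-2C_1+\sum_{m=0}^{(s-1)k-1}\frac{1}{k}\phi_k(T^m x).
\]
The missing tail $\sum_{m=(s-1)k}^{n-1}\frac{1}{k}\phi_k(T^m x)$ consists of at most $2k$ terms each bounded by $C_1/k$ in absolute value, so it contributes at most $2C_1$ in absolute value; absorbing it changes the constant to $C:=4C_1$ and gives the required pointwise bound, which combined with the uniform continuity estimate yields the sublemma.

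The only mild subtlety is keeping track of signs when one drops the tail: for subadditive sequences one enlarges an upper bound, for supadditive ones one weakens a lower bound, and in both cases the cost is the same bounded constant because $|\phi_k|$ is uniformly bounded. I expect no genuine obstacle; the bookkeeping of boundary terms in the decomposition of $[0,n)$ is the most delicate point but is handled exactly as in Lemma~\ref{subadditive}.
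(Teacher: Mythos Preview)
Your proposal is correct and follows essentially the same approach as the paper's own proof: the paper simply says ``Using the supadditivity of $\Phi$, as (\ref{bds22}) we have $\phi_n(x)\ge\sum_{i=0}^{n-1}\frac{1}{k}\phi_k(T^ix)-C$'' and then concludes via uniform continuity of $\phi_k/k$, exactly as you do. Your write-up spells out the boundary-term bookkeeping that the paper leaves implicit by pointing to Lemma~\ref{subadditive}, but the argument is the same.
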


\begin{proof}
Fix a positive integer $k$.  Since $\frac{1}{k}\phi_k(x)$ is a
continuous function, for any $\eta >0$, there exist $\epsilon_0 >0$
such that for any $0<\epsilon  <\epsilon_0$,
\begin{eqnarray*}
 d(x,y)<\epsilon
\Rightarrow d(\frac{1}{k}\phi_k(x), \frac{1}{k}\phi_k(y))<\eta.
 \end{eqnarray*}
Using the supadditivity of $\Phi$, as (\ref{bds22}) we have
\[
\phi_n(x)\geq  \sum_{i=0}^{n-1} \frac{1}{k}\phi_k(T^ix)-C,
\]
where $C$ is a constant. Thus
\[
 \phi_n(x) \geq \sup_{y\in B_n(x,\epsilon)} \sum_{i=0}^{n-1}
 \frac{1}{k}\phi_k(T^iy)-n\eta -C,
\]
which is the desired result.
\end{proof}

\section{Dimensions of ergodic measure on an average conformal repeller}
\setcounter{equation}{0}

In this section we give an application of the nonadditive
measure-theoretic pressures to average conformal repellers defined
in \cite{ban}.
We use the relations proved in the previous sections
among the pressures, entropy, and limits of the nonadditive potentials
to prove that the Hausdorff and box dimension of an average conformal
repeller is equal to the Hausdorff dimension
of an ergodic measure support on it.

Let $M$ be an $m$-dimensional smooth Riemannian manifold. Let $U$
be an open subset of $M$ and $f:U\rightarrow M$ be a $C^1$ map.
Suppose $J\subset U$ is a compact $f$-invariant subset. And let
$\mathcal{M}(f|_J)$ and $\mathcal{E}(f|_J)$ denote the set of all
$f-$invariant measures and the set of all ergodic measures
supported on $J$ respectively.

For $x\in M$ and $v\in T_xM$, the Lyapunov exponent of $v$ at $x$
is the limit
\begin{eqnarray*}
\chi(x,v)=\lim_{n\to\infty}\frac 1n\log \|Df_x^n(v)\|
\end{eqnarray*}
if the limit exists. By the Oseledec multiplicative ergodic
theorem \cite{osel}, for $\mu$-almost every point $x$, every
vector $v\in T_xM$ has a Lyapunov exponent, and they can be
denoted by $\lambda_1(x)\leq \lambda_2(x)\leq \cdots \leq
\lambda_m(x)$. Since the Lyapunov exponents are $f$-invariant, if
$\mu$ is ergodic, then we can define Lyapunov exponents
$\lambda_1(\mu)\leq \lambda_2(\mu)\leq \cdots \leq
 \lambda_m(\mu)$, where $m=\mathrm{dim}M$, for the measure.

A compact invariant set $J\subset M$ is an \emph{average conformal
repeller} if for any $\mu\in \mathcal{E}(f|_J)$, $\lambda_1(\mu)=
\lambda_2(\mu)= \cdots
 =\lambda_m(\mu)>0$.
For simplicity we denote by $\lambda(\mu)$ the unique Lyapunov
exponent with respect to $\mu$.

\begin{remark}\label{acr1}
If a compact $f$-invariant set $J$ is an average conformal
repeller, it is indeed a repeller in the usual way (\cite{c3}),
that is, $f$ is uniformly expanding on $J$.

On the other hand, there are average conformal repellers which are
not conformal  repellers (see an example in \cite{zcb}).
\end{remark}

\begin{remark}\label{acr2}
The notion of average conformal repellers is a generalization of
the quasi-conformal and asymptotically conformal repellers in
\cite{ba,pes1}. In \cite{ban}, the authors studied the dimensions
of average conformal repellers  by using thermodynamic formalism.
\end{remark}

Given a set $Z\subset M$, its {\it Hausdorff dimension} is defined by
$${\dim}_\mathrm{H}(Z)=\inf\{s: \;
\lim_{\epsilon\to 0}\inf_{\diams{\mathcal U} <\epsilon}
\sum_{U\in{\mathcal U}}(\diam U)^s=0\},$$
where ${\mathcal U}$ is a cover of $Z$ and
$\diam{\mathcal U}=\sup\{\diam U:\ U\in{\mathcal U}\}$.
If $\nu$ is a probability measure on $M$, then the Hausdorff dimension
of the measure $\nu$ is given by
$${\dim}_\mathrm{H}(\nu)
=\inf\bigl\{{\dim}_\mathrm{H}(Z): \;Z\subset M,\; \nu Z=1\;\bigr\}.$$

The \emph{upper} and \emph{lower box dimensions} of $Z$ are defined by
$$\overline{\dim}_\mathrm{B}(Z)
=\limsup_{\epsilon\to 0}\frac {\log N(\epsilon)}
{-\log \epsilon}\qquad\hbox{and}\qquad
\underline{\dim}_\mathrm{B}(Z)
=\liminf_{\epsilon\to 0}\frac {\log N(\epsilon)}
{-\log \epsilon}$$
respectively, where $N(\epsilon)$ denotes the minimum number of balls
of radius $\epsilon$ which cover $Z$.
$\overline{\dim}_\mathrm{B}(Z)$ and $\underline{\dim}_\mathrm{B}(Z)$
are also called \emph{upper} and \emph{lower capacities}.
If $\overline{\dim}_\mathrm{B}(Z)=\underline{\dim}_\mathrm{B}(Z)$,
then we simply call this number the \emph{box dimension}, and denoted it
by $\dim_\mathrm{B}(Z)$.
Similarly, for any probability measure $\nu$, we have
\begin{equation*}
\begin{split}
\overline{\dim}_\mathrm{B}(\nu)
&=\lim_{\delta\to 0} \inf\{\overline{\dim}_\mathrm{B}(Z):
Z\subset M, \ \nu Z > 1-\delta\},  \\
\underline{\dim}_\mathrm{B}(\nu)
&=\lim_{\delta\to 0} \inf\{\underline{\dim}_\mathrm{B}(Z):
Z\subset M, \ \nu Z > 1-\delta\}.
\end{split}
\end{equation*}

The \emph{upper} and \emph{lower Ledrappier dimensions}
(\cite{ledra,pes1}) is given by
$$\overline {\dim}_\mathrm{L}(\nu)
=\lim_{\delta\to 0}\limsup_{\epsilon\to 0} \frac {\log N(\epsilon,
\delta)}{-\log \epsilon}\qquad\hbox{and}\qquad \underline
{\dim}_\mathrm{L}(\nu) =\lim_{\delta\to 0}\liminf_{\epsilon\to 0}
\frac {\log N(\epsilon, \delta)}{-\log \epsilon},$$ where
$N(\epsilon, \delta)$ is the minimum number of balls of diameter
$\epsilon$ covering a subset in $M$ of measure greater than
$1-\delta$.

It is known that if a compact invariant set $J\subset M$ is a conformal
repeller for a $C^{1}$ map, then
$J$ support an ergodic measure $\mu$ such that the Hausdorff or
box dimension of measure $\mu$ are equal to the Hausdorff or box
dimension of the set $J$, which are known as the variational
principle for dimension. And these dimensions can be given by the
ratio of the measure-theoretic entropy and the Lyapunov exponent
(see e.g. \cite{bo1, rue2, gy}). See also \cite{ba2,lu} for some
recent progresses of variational principle of dimension for
non-conformal maps.

Our next theorem is a generalization of the results to
average conformal repeller of a $C^1$ map $f$.

\begin{TheoremE}
Suppose $J$ is an average conformal repeller of a $C^1$ map $f$, then
there exists an $f$-invariant ergodic measure $\mu$ supported on $J$
such that
$$D(J)=D(\mu)=\frac{h_{\mu}(f)}{\lambda(\mu)},$$
where $D(\mu)$ is $\dim_{\mathrm{H}}\mu$,
$\overline{\dim}_{\mathrm{L}}\mu$,
$\underline{\dim}_{\mathrm{L}}\mu$, $\overline{\dim}_{\mathrm{B}}\mu$
or $\underline{\dim}_{\mathrm{B}}\mu$,  and $D(J)$ is
$\dim_{\mathrm{H}}J$, $\underline{\dim}_{\mathrm{B}}J$ or
$\overline{\dim}_{\mathrm{B}}J$.
 \end{TheoremE}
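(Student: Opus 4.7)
My plan is to combine the dimensional characterization of average conformal repellers from \cite{ban} with the subadditive inverse variational principle of Theorem~A. Set $\Phi^t=\{-t\log\|Df^n(\cdot)\|\}_{n\ge 1}$, which is a subadditive potential on $J$ for each $t\ge 0$. The results of \cite{ban} give $\dim_{\mathrm{H}}J=\underline{\dim}_{\mathrm{B}}J=\overline{\dim}_{\mathrm{B}}J=t^*$, where $t^*$ is the unique zero of the pressure function $t\mapsto P(f,\Phi^t)$. My first step is to apply the subadditive variational principle of \cite{c}: the supremum $\sup_{\mu\in\mathcal{M}_f}\{h_\mu(f)+\Phi^{t^*}_*(\mu)\}=0$ is attained, because the functional is upper semicontinuous (entropy is upper semicontinuous on the expansive system $f|_J$, which is uniformly expanding by Remark~\ref{acr1}, and $\mu\mapsto\Phi^{t^*}_*(\mu)$ is upper semicontinuous as an infimum of continuous linear functionals). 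The ergodic decomposition, combined with affineness of both $h_\mu(f)$ and $\Phi^t_*(\mu)$ in $\mu$, then lets me replace any maximizer by an ergodic measure $\mu\in\mathcal{E}(f|_J)$ still satisfying $h_\mu(f)+\Phi^{t^*}_*(\mu)=0$.

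Next I use the average conformality of $J$: every Lyapunov exponent of $\mu$ coincides with a single value $\lambda(\mu)>0$, so Kingman's theorem gives $\Phi^t_*(\mu)=-t\lambda(\mu)$. Substituting into the previous identity yields the central equality $h_\mu(f)/\lambda(\mu)=t^*$. To transfer this into dimension statements for $\mu$, I show that $\mu$ is exact-dimensional with pointwise dimension $t^*$. On the entropy side, Brin--Katok's theorem gives, for any $\eta>0$ and all small enough $\epsilon$,
\[
\exp\bigl(-n(h_\mu(f)+\eta)\bigr)\le \mu(B_n(x,\epsilon))\le \exp\bigl(-n(h_\mu(f)-\eta)\bigr)
\]
for $\mu$-a.e.\ $x$ and all large $n$. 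On the geometric side, average conformality together with uniform expansion forces both $(1/n)\log\|Df^n(x)\|$ and $(1/n)\log m(Df^n(x))$ to converge to $\lambda(\mu)$ on a set of full $\mu$-measure; hence the Bowen ball $B_n(x,\epsilon)$ is sandwiched between Euclidean balls of radii $\epsilon\exp(-n(\lambda(\mu)\pm\eta))$. Combining these two estimates yields the pointwise dimension $d_\mu(x)=h_\mu(f)/\lambda(\mu)=t^*$ for $\mu$-a.e.\ $x$.

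Standard dimension theory (see \cite{pes1}) now implies that $\mu$ is exact-dimensional with
\[
\dim_{\mathrm{H}}\mu=\underline{\dim}_{\mathrm{L}}\mu=\overline{\dim}_{\mathrm{L}}\mu=\underline{\dim}_{\mathrm{B}}\mu=\overline{\dim}_{\mathrm{B}}\mu=t^*,
\]
and together with $\dim_{\mathrm{H}}J=\underline{\dim}_{\mathrm{B}}J=\overline{\dim}_{\mathrm{B}}J=t^*$ this gives Theorem~E. The main obstacle is the Bowen-to-metric-ball comparison used to identify the pointwise dimension: the almost-everywhere Lyapunov information coming from Kingman must be converted into a uniform, subexponential comparison between $B_n(x,\epsilon)$ and a Euclidean ball of the expected radius, valid on a set of large $\mu$-measure. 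A bounded-distortion argument exploiting uniform expansion and the continuity of $x\mapsto Df(x)$ on the compact set $J$ is the natural tool; carrying it out while keeping the $\eta$-error under control is the technical heart of the proof.
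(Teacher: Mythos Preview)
Your overall strategy matches the paper's closely for the first half: both use the \cite{ban} identification of $D(J)$ as the root of a pressure equation, and both obtain an ergodic equilibrium state by combining upper semicontinuity of entropy (from uniform expansion, Remark~\ref{acr1}), upper semicontinuity of the potential term, compactness of $\mathcal{M}(f|_J)$, the variational principle, and ergodic decomposition. However, there is a genuine slip in your setup: for $t>0$ the sequence $\Phi^t=\{-t\log\|Df^n(\cdot)\|\}$ is \emph{supadditive}, not subadditive, since $\|AB\|\le\|A\|\,\|B\|$ gives the inequality the wrong way. Consequently the subadditive variational principle of \cite{c} does not apply directly to $\Phi^t$. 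The paper avoids this by working instead with ${\mathcal F}=\{\log m(Df^n)\}$, so that $-s{\mathcal F}$ is genuinely subadditive (because $m(AB)\ge m(A)m(B)$), and then \cite{c} applies cleanly. In the average conformal setting the two choices give the same pressure function and the same $\lambda(\mu)$, so your argument is easily repaired by this substitution; but as written the invocation of \cite{c} is not justified.

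For the second half, the approaches diverge. The paper simply cites \cite[Corollary~2]{cao} for the identity $D(\mu)=h_\mu(f)/\lambda(\mu)$ and is done. You instead sketch a direct proof of exact dimensionality via Brin--Katok plus a Bowen-ball/Euclidean-ball comparison, correctly flagging the bounded-distortion step as the technical core. Your route is more self-contained and illuminates why the formula holds, at the cost of having to carry out the distortion estimate (which in this setting relies on the uniform convergence $\frac{1}{n}\log\bigl(\|Df^n\|/m(Df^n)\bigr)\to 0$ from \cite{ban}). Finally, note that despite your opening sentence, you never actually use Theorem~A; neither does the paper's proof of Theorem~E.
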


In the following, for each $f$-invariant ergodic measure $\mu$
supported on $J$, we will construct a certain set $K$
whose dimension is equal to the dimension of the measure.

We denote by $m(Df(x))$ the minimal norm of $Df(x)$, that is,
$m(Df(x))=\min\{\|Df(x)u\|: u\in T_xM, \|u\|=1\}$.

\begin{TheoremF}
Suppose $J$ is an average conformal repeller of a $C^1$ map $f$,
and $\mu\in \mathcal{E}(f|_J)$.  Let
${\mathcal{F}}=\{ \log m(Df^n(x))\}_{n\geq  1}$ and
\[
K=\bigl\{x\in M:
\lim_{\epsilon\rightarrow0}\limsup_{n\rightarrow\infty}
\frac{-\log\mu(B_n(x,\epsilon))}{n}=h_\mu(T)
\  \text{and}\ \lim_{n\rightarrow\infty}\frac{1}{n} \log
m(Df^n(x))={\mathcal{F}}_*(\mu) \bigr\}.
\]
Then we have
\[
D(K)=D(\mu),
\]
where $D(\mu)$ is $\dim_{\mathrm{H}}\mu$,
$\overline{\dim}_{\mathrm{L}}\mu$,
$\underline{\dim}_{\mathrm{L}}\mu$, $\overline{\dim}_{\mathrm{B}}\mu$
or $\underline{\dim}_{\mathrm{B}}\mu$,  and $D(K)$ is
$\dim_{\mathrm{H}}K$, $\underline{\dim}_{\mathrm{B}}K$ or
$\overline{\dim}_{\mathrm{B}}K$.
\end{TheoremF}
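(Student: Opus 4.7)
The plan is to combine the nonadditive pressure machinery of Theorem~B with the geometric fact, special to average conformal repellers, that Bowen balls are sandwiched between Euclidean balls of comparable radius. First, observe that $\mathcal{F}=\{\log m(Df^n)\}$ is supadditive by the chain rule, and that $\mathcal{F}_{*}(\mu)=\lambda(\mu)$ by the Oseledec theorem and average conformality. Combining the Brin--Katok entropy formula with the subadditive ergodic theorem shows that both conditions defining $K$ hold $\mu$-a.e., so $\mu(K)=1$. This immediately gives $D(\mu)\le D(K)$ for every dimension $D$ in the list, since $K$ is then admissible in each infimum defining $D(\mu)$.

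The geometric core of the argument is a two-sided comparison. Since $J$ is uniformly expanding (Remark~4.1) and average conformal, on a set of full $\mu$-measure the ratio $\|Df^n(x)\|/m(Df^n(x))$ grows at most like $e^{n\eta}$ for any prescribed $\eta>0$; intersecting $K$ with this set does not change any of its dimensions. Combined with the standard $C^1$ distortion estimate on the uniformly expanding $J$, this yields, for $x\in K$ and $n$ large,
\[
B\bigl(x,C^{-1}\epsilon\, e^{-n(\lambda(\mu)+\eta)}\bigr)
\ \subset\ B_n(x,\epsilon)
\ \subset\ B\bigl(x,C\epsilon\, e^{-n(\lambda(\mu)-\eta)}\bigr).
\]
Together with the defining property of $K$ that $-n^{-1}\log\mu(B_n(x,\epsilon))\to h_\mu(f)$ as $n\to\infty$ and then $\epsilon\to 0$, this shows that the pointwise dimension of $\mu$ exists at every $x\in K$ and equals $d:=h_\mu(f)/\lambda(\mu)$. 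By the standard pointwise dimension theorem, all five of the $\mu$-dimensions in the list then collapse to $d$.

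It remains to bound $D(K)$ above by $d$. For $\dim_\mathrm{H}K$ I would stratify $K=\bigcup_n K_n^\eta$ with $K_n^\eta=\{x\in K:\mu(B(x,r))\ge r^{d+\eta}\text{ for all }r\le 1/n\}$; a maximal $r$-separated subset of $K_n^\eta$ has cardinality at most $O(r^{-(d+\eta)})$ by disjointness of the half-size balls, so $\dim_\mathrm{H}K_n^\eta\le d+\eta$ and then $\dim_\mathrm{H}K\le d$ by countable stability and $\eta\to 0$. For the box dimensions, which are not countably stable, I would instead apply Theorem~B to the subadditive potential $\mathcal{F}_s=\{-s\log m(Df^n)\}$, obtaining
\[
\overline{CP}_K(f,\mathcal{F}_s)=\underline{CP}_K(f,\mathcal{F}_s)=h_\mu(f)-s\lambda(\mu),
\]
and transfer this into a Bowen-type formula for $\overline{\dim}_\mathrm{B}K$ and $\underline{\dim}_\mathrm{B}K$: the sandwich above converts covers of $K$ by Bowen balls of common length $N$ into covers by Euclidean balls of radius $\sim\epsilon\,e^{-N(\lambda(\mu)\pm\eta)}$ and back, so the critical $s$ at which these capacities vanish equals the corresponding box dimension. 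That critical value is $d$, which closes the argument.

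The main obstacle, I expect, is in this last conversion: rigorously justifying a Bowen-type formula on the \emph{non-invariant} set $K$ which matches the critical parameters of $\underline{CP}_K$ and $\overline{CP}_K$ with $\underline{\dim}_\mathrm{B}K$ and $\overline{\dim}_\mathrm{B}K$ respectively. One must track the supremum appearing in Definition~1.2, the distortion constant $C$, and the $\eta$-loss in the sandwich, and verify that letting $\eta\to 0$ yields matching upper and lower bounds for each of the three dimensions of $K$ claimed in the theorem.
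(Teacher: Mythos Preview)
Your strategy is close in spirit to the paper's but differs in execution. Both routes hinge on Theorem~B and a Bowen-type equation, yet the paper takes a shorter path: it first invokes \cite[Theorem~4.2]{ban} to rewrite $K$ in terms of the \emph{additive} potential $\phi(x)=\frac{1}{m}\log|\det Df(x)|$, applies Theorem~B to obtain $P_K(f,-s\phi)=\overline{CP}_K(f,-s\phi)=h_\mu(f)-s\int\phi\,\mathrm{d}\mu$, and then cites \cite{cao} for the fact that for any $Z\subset J$ one has $\dim_\mathrm{H}Z\ge t^*$ and $\overline{\dim}_\mathrm{B}Z\le s^*$, where $t^*,s^*$ are the roots of $P_Z(f,-t\phi)=0$ and $\overline{CP}_Z(f,-s\phi)=0$. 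Together with $D(\mu)=h_\mu(f)/\lambda(\mu)$ from Theorem~E this pins $D(K)=d$ immediately. Your route instead gets the lower bound $D(K)\ge d$ from $\mu(K)=1$ together with a pointwise-dimension computation, and the upper bound by rederiving the Bowen equation from the sandwich comparison; this is more self-contained but more laborious, and your separate stratification argument for $\dim_\mathrm{H}K$ is in fact subsumed by the box-dimension bound once that is established.

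There is one genuine gap. Your sandwich requires simultaneous control of $m(Df^n(x))$ and $\|Df^n(x)\|$, but the definition of $K$ only pins down the former. You patch this by intersecting $K$ with a full-measure set on which the ratio $\|Df^n\|/m(Df^n)$ grows subexponentially, asserting that this ``does not change any of its dimensions.'' That assertion is unjustified and in general false: removing a $\mu$-null set can drastically reduce Hausdorff or box dimension of a set. The correct fact---and it is exactly why the paper's reduction to the additive $\phi$ works---is that on an average conformal repeller the convergence
\[
\frac{1}{n}\bigl(\log\|Df^n(x)\|-\log m(Df^n(x))\bigr)\to 0
\]
holds \emph{uniformly} on $J$ (this is essentially \cite[Theorem~4.2]{ban}). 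With uniformity in hand the sandwich holds for every $x\in J$, no intersection is needed, and your conversion between $\overline{CP}_K(f,\mathcal{F}_s)$ and $\overline{\dim}_\mathrm{B}K$ goes through cleanly. Without it, the upper bound on the box dimensions of $K$ does not close, since covers of $K$ by Bowen balls of common length $N$ need the comparison at \emph{every} center, not merely almost every one.
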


We end the paper by provide a proof of these two theorems.

\begin{proof}[Proof of Theorem E]
In \cite{ban}, the authors proved that $D(J)=s_0$,
where $s_0$ is the unique root of the equation $P(f,-s{\mathcal{F}})=0$,
where ${\mathcal{F}}=\{ \log m(Df^n(x))\}_{n\geq  1}$.
Next, we show that the subadditive topological pressure
$P(f,-s{\mathcal{F}})$ can be attained  by an ergodic measure.

By Remark~\ref{acr1}, we know that the entropy map $\nu\mapsto
h_{\nu}(f)$ is upper-semicontinuous on $\mathcal{M}(f|_J)$. It is
also easy to check that $\nu\mapsto {-s\mathcal{F}}_{*}(\nu)$ is
upper-semicontinuous on $\mathcal{M}(f|_J)$. Hence by compactness
of $\mathcal{M}(f|_J)$, and the variational principle for
subadditive topological pressure \cite{c}, we have
$P(f,-s{\mathcal{F}})=h_{\mu_0}(f)-s{\mathcal{F}}_{*}(\mu_0)$ for
some invariant measure $\mu_0\in \mathcal{M}(f|_J)$. By ergodic
decomposition theorem (see e.g. \cite{w2}), we can obtain that
$P(f,-s{\mathcal{F}})=h_{\mu}(f)-s{\mathcal{F}}_{*}(\mu)$ for some
$f$-invariant ergodic measure $\mu$.

Note that ${\mathcal{F}}_{*}(\mu)=\lambda (\mu)$ is the unique
Lyapunov exponent with respect to $\mu$.  Hence
$D(J)=s_0=\frac{h_{\mu}(f)}{\lambda  (\mu)}$.
On the other hand, by (\cite[Corollary 2]{cao}), we can get
$D(\mu)=\frac{h_{\mu}(f)}{\lambda  (\mu)}$.
Thus we obtain the equality of the theorem.
\end{proof}

\begin{proof}[Proof of Theorem F]
For an $f$-invariant ergodic measure $\mu$ and for each $s>0$,
by Theorem 4.2 in \cite{ban}, the set $K$ is equal to the set
\[
\bigl\{x\in
M:\lim_{\epsilon\rightarrow0}\limsup_{n\rightarrow\infty}
\frac{-\log\mu(B_n(x,\epsilon))}{n}=h_\mu(T)
\ \text{and}\ \lim_{n\rightarrow\infty}\frac{-s}{n}\sum_{i=0}^{n-1}\phi
(f^ix) =-s\int \phi \mathrm{d}\mu \bigr\},
\]
where $\phi(x)=\frac 1 m\log |\det (Df(x))|$.
Using Theorem~B, we have
\begin{eqnarray}\label{equality53}
P_K(T,-s\phi)=\overline{CP}_{K}(T,-s\phi)=h_{\mu}(f)-s\int \phi
\mathrm{d}\mu.
\end{eqnarray}
For every subset $Z\subset J$, it is proved in \cite{cao} that
\[
\dim_\mathrm{H} Z\geq t^* \ \ \text{and} \ \
\overline{\dim}_{\mathrm{B}}Z\leq s^*,
\]
where $t^*$ and $s^*$ are the unique root of the Bowen's equation
$P_Z(f,-t\phi)=0$ and $\overline{CP}_Z(f, -s\phi)=0$ respectively.
Using this fact and (\ref{equality53}), we have
\[
\dim_{\mathrm{H}}K\geq
\frac{h_{\mu}(f)}{{\mathcal{F}}_*(\mu)} \ \ \text{and} \ \
\overline{\dim}_{\mathrm{B}}K\leq
\frac{h_{\mu}(f)}{{\mathcal{F}}_*(\mu)}
\]
since ${\mathcal{F}}_*(\mu)=\int \phi \mathrm{d}\mu$.  Note that
${\mathcal{F}}_*(\mu)$ is equal to the unique Lyapunov exponent
$\lambda(\mu)$ because $J$ is an average conformal repeller.
Combining the above inequalities and the result in Theorem~E, we have
\[
D(K)=\frac{h_{\mu}(f)}{\lambda(\mu)}=D(\mu).
\]
\end{proof}

\vskip0.2cm \noindent {\bf Acknowledgements.}  Authors would like
to thank Wen Huang and Pengfei Zhang for giving us the manuscript
\cite{huang}. They would also like to thank the referee for
providing us invaluable suggestions, which led to great
improvements of this paper.  Cao is partially supported by NSFC
(10971151,11125103) and the 973 Project (2007CB814800), Zhao is
partially supported by NSFC (11001191), Ph.D. Programs Foundation
of Ministry of Education of China (20103201120001) and NSF in
Jiangsu province (09KJB110007).

\end{document}